\numberwithin{equation}{section} \allowdisplaybreaks
\begin{document}
\newtheorem{theorem}{Theorem}[section]
\newtheorem{defin}{Definition}[section]
\newtheorem{prop}{Proposition}[section]
\newtheorem{corol}{Corollary}[section]
\newtheorem{lemma}{Lemma}[section]
\newtheorem{rem}{Remark}[section]
\newtheorem{example}{Example}[section]
%\label{} %\ref{}
\title{A note on submanifolds and mappings in generalized complex geometry}
\author{{\small by}\vspace{2mm}\\Izu Vaisman}
\date{}
\maketitle
{\def\thefootnote{*}\footnotetext[1]%
{{\it 2000 Mathematics Subject Classification: 53C56} .
\newline\indent{\it Key words and phrases}: Generalized complex structure; Generalized K\"ahler structure; Induced structure; Bicomplex submanifold.}}
\begin{center} \begin{minipage}{12cm}
A{\footnotesize BSTRACT. In generalized complex geometry, we revisit linear subspaces and submanifolds that have an induced generalized complex structure. We give an expression of the induced structure that allows us to deduce a smoothness criterion, we dualize the results to submersions and we make a few comments on generalized complex mappings. Then, we discuss submanifolds of generalized K\"ahler manifolds that have an induced generalized K\"ahler structure. These turn out to be the common invariant submanifolds of the two classical complex structures of the generalized K\"ahler manifold.}
\end{minipage}
\end{center} \vspace{5mm}
\section{Introduction}
\label{intro}
Induced generalized complex structures of submanifolds were introduced and studied in \cite{BB} and the study was continued in \cite{{BS},{Goto},{IV}}, etc. After a preliminary Section 2, where we recall known results, in Section 3, we give a general expression of the induced structure and derive a smoothness criterion. The results are dualized to submersions and a few remarks on more general mappings are made, including a proposed, new definition of generalized complex mappings. In Section 4, we consider the generalized Hermitian and generalized K\"ahler case, which is known to be given by a 1-1 correspondence with quadruples $(\gamma,\psi,J_\pm)$, where $\gamma$ is a metric, $\psi$ is a $2$-form and $J_\pm$ are classical $\gamma$-compatible complex structures. Then, we define the notion of induced structure in a way that is compatible with the generalized complex case and is such that existence of the induced structure is equivalent to $J_\pm$-invariance. We work in the $C^\infty$-category and use the classical notation of Differential Geometry \cite{KN}.
\section{Preliminaries}
In this section, required by the referee for the benefit of the non-expert reader, we recall some basic notions of generalized complex geometry. More details may be found in \cite{Galt}.

Let $M$ be an $m$-dimensional, differentiable manifold. Its {\it big tangent bundle} is the Whitney sum of the tangent and cotangent bundle: $\mathbf{T}M=TM\oplus T^*M$,
which has the
non degenerate, neutral metric
\begin{equation}\label{gptCourant} g((X,\alpha),(Y,\mu))=
\frac{1}{2}(\alpha(Y)+\mu(X)),
\end{equation}
where $X,Y\in
\chi^1(N),\,\alpha,\mu\in\Omega^1(N)$ ($\chi^k(M)$ is
the space of $k$-vector fields and $\Omega^k(M)$ is the space of
differential $k$-forms on $M$; we will also use calligraphic
characters for pairs: $\mathcal{X}=(X,\alpha),\mathcal{Y}=(Y,\mu)$,
etc.).
The metric reduces the structure group of $\mathbf{T}M$ to $O(m,m)$. The scope of generalized geometry is the study of
{\it generalized geometric structures}, defined as further reductions of the structure group
$O(m,m)$ to various subgroups.

If only linear algebra is concerned (which is the situation of the big tangent bundle at a point of $M$), we may look at a finite-dimensional vector space $W$ and at $\mathbf{W}=W\oplus W^*$, endowed with the similar metric (\ref{gptCourant}). In the whole paper, we will use the same notation for vectors and tensors in the linear and non-linear case and the context will show in which case we are.

In the case of a manifold $M$, there exists another important operation defined on smooth sections of $\mathbf{T}M$,
the {\it Courant bracket}, given by
\begin{equation}\label{crosetC}
[(X,\alpha),(Y,\mu)]=([X,Y],L_X\mu-L_Y\alpha
+\frac{1}{2}d(\alpha(Y)-\mu(X)),\end{equation}
where $[X,Y]$ is the Lie bracket and $L$ is the Lie derivative.

A $B$-field (gauge) transformation defined by a $2$-form $B$ of $M$ is a mapping
$$(X,\alpha)\mapsto(X,\alpha+i(X)B).$$ The transformation preserves the metric and, if the form $B$ is closed, the transformation preserves the Dirac bracket too.

A real, respectively complex, {\it almost Dirac structure} on $M$ is a maximally $g$-isotropic subbundle $D$ of $\mathbf{T}M$, respectively $\mathbf{T}^cM=\mathbf{T}M\otimes\mathds{C}$ (the upper index $c$ denotes complexification). Then, if $D$ is closed under Dirac brackets, $D$ is said to be an {\it integrable} or {\it Dirac structure}. The $B$-field transformations send almost Dirac structures to almost Dirac structures and, if $B$ is closed, send Dirac structures to Dirac structures.

A {\it generalized, almost complex structure} is a reduction of the structure group to $O(m,m)\cap Gl(2m,\mathds{C})$, equivalently, it is an endomorphism $\mathcal{J}\in
End(\mathbf{T}M)$ such that
\begin{equation}\label{skewsym}	 \mathcal{J}^2=-
Id,\;\;g(\mathcal{X},\mathcal{J}\mathcal{Y}) +
g(\mathcal{J}\mathcal{X},\mathcal{Y})=0.\end{equation}
Like in the classical case, $\mathcal{J}$ is equivalent with a decomposition
$\mathbf{T}^cM=L\oplus\bar L$, where $L,\bar{L}$ are the $\pm i$-eigenbundles of $\mathcal{J}$ and (\ref{skewsym}) ensures that $L$ and its complex conjugate $\bar L$ are complex almost Dirac structures.

The expression
\begin{equation}\label{NijPsi}
\mathcal{N}_\mathcal{J}(\mathcal{X},\mathcal{Y}) = [\mathcal{J}\mathcal{X},
\mathcal{J}\mathcal{Y}] -\mathcal{J}[\mathcal{X},\mathcal{J}\mathcal{Y}] -
\mathcal{J}[\mathcal{J}\mathcal{X},\mathcal{Y}]
-[\mathcal{X},\mathcal{Y}]
\end{equation}
has a tensorial character and it is called the {\it Courant-Nijenhuis
torsion} of $\mathcal{J}$. If $
\mathcal{N}_\mathcal{J}=0$, the structure is {\it integrable} and the term
``almost" is omitted. Integrability is equivalent with the closure of $L$ under Courant brackets. Hence, on a generalized complex manifold $L,\bar{L}$ are complex Dirac structures.

In other words, a generalized almost complex structure is equivalent with a maximally $g$-isotropic (almost Dirac) subbundle $L\subset\mathbf{B}^c$ such that $L\cap\bar{L}=0$ (bar denotes complex conjugation) and the structure is integrable iff $L$ is closed under the Courant bracket.

A manifold endowed with an (almost) generalized complex structure is called an (almost) generalized complex manifold.

In terms of classical tensor fields an almost generalized complex structure may be written as
\begin{equation}\label{J} \mathcal{J}\left(
\begin{array}{c}X\vspace{2mm}\\ \alpha \end{array}
\right) = \left(\begin{array}{cc} A&\sharp_\pi\vspace{2mm}\\
\flat_\sigma&-A^*\end{array}\right) \left( \begin{array}{c}X\vspace{2mm}\\
\alpha \end{array}\right),\end{equation}
where $*$ denotes transposition, $A\in End(TM),\pi\in\chi^2(M),\sigma\in\Omega^2(M)$, $\sharp_\pi\alpha=i(\alpha)\pi,\flat_\sigma X=i(X)\sigma$. The conditions satisfied by $\mathcal{J}$ are equivalent to
\begin{equation}\label{2-reddezv} A^2+\sharp_\pi\circ\flat_\sigma=-Id,\;\sharp_\pi\circ A^*=A\circ\sharp_\pi,\;\flat_\sigma\circ A=A^*\circ\flat_\sigma\end{equation}
and imply the fact that $m=dim\,M$ must be even.  With (\ref{J}), we have
\begin{equation}\label{eqluiL} L=\{(X-i(AX+\sharp_\pi\alpha),
\alpha-i(\flat_\sigma X-\alpha\circ A))\}.
\end{equation}

If only algebraic facts are concerned (no Dirac brackets or integrability involved), the notions and notation above apply to {\it linear, generalized complex structures} $\mathcal{J}$ of a real vector space $W$.

A Euclidean metric $G$ of a space $\mathbf{W}=W\oplus W^*$ is a {\it generalized metric} of $W$ if there exists a decomposition $\mathbf{W}=W_+\oplus W_-$, where $W_\pm$ are maximal $g$-positive and $g$-negative subspaces (thus, $dim\,W_\pm=dim\,W$), such that $W_+\perp_gW_-,\,W_+\perp_GW_-,\,G|_{W_\pm}=\pm2g|_{W_\pm}$.
Such a metric is equivalent with a $g$-symmetric product structure $H\in End(\mathbf{W})$, $H^2=Id.$ The subspaces $W_\pm$ are the $\pm1$-eigenspaces of $H$ (thus, $H$ is a generalized paracomplex structure \cite{IV}) and
$$ G((X,\alpha),(Y,\beta))= 2g(H(X,\alpha),(Y,\beta)).$$

Since the elements of $W_\pm$ are not $g$-isotropic, we have $W_\pm\cap W=0$, hence the projections $pr_{W}:W_{\pm}\rightarrow W$ are isomorphisms with the inverse denoted by $\tau_\pm:W\rightarrow W_{\pm}$ and $\tau_\pm X=(X,\flat_{\theta_\pm} X)$ ($X\in W$) where $\theta_{\pm}$ are non degenerate, $2$-covariant tensors of $W$.
Furthermore, $g(\tau_+X,\tau_-X')=0$ implies $\theta_+(X,X')=-\theta_-(X',X)$ and the two tensors must have the same skew-symmetric part and opposite symmetric parts. I.e., $\theta_\pm=\psi\pm\gamma$ where $\psi\in\wedge^2W^*$ and $$\gamma(X,X')=g(\tau_\pm X,\tau_\pm X') =(1/2)G(\tau_\pm X,\tau_\pm X')$$ is a positive definite metric on $W$. This shows that generalized metrics $G$ bijectively correspond to pairs $(\gamma,\psi)$ \cite{Galt}.

The formula
$$ (X,\alpha)=(X_+,\flat_{\psi+\gamma}X_+)+ (X_-,\flat_{\psi-\gamma}X_-),\;X_\pm= \frac{1}{2}((Id\pm\varphi)X\pm\sharp_\gamma\alpha),$$
where $\varphi=-\sharp_\gamma\flat_\psi$, yields the $W_\pm$ decomposition of $(X,\alpha)$. Since $W_\pm$ are the $\pm1$-eigenbundles of $H$, we deduce the following matrix representation
\begin{equation}\label{matriceG}
H\left(
\begin{array}{c}X\vspace{2mm}\\ \alpha \end{array}
\right) = \left(\begin{array}{cc} \varphi&\sharp_\gamma\vspace{2mm}\\
\flat_\beta&\varphi^*\end{array}\right) \left( \begin{array}{c}X\vspace{2mm}\\
\alpha \end{array}\right), \end{equation}
where $\flat_\beta=\flat_\gamma\circ(Id-\varphi^2)$ and $\beta$ is also symmetric and positive definite.

A {\it generalized Hermitian structure} of $(W,G)$ is a generalized complex structure $\mathcal{J}$ that is $G$-skew-symmetric, equivalently, commutes with $H$. If this happens, $\mathcal{J}'=H\circ\mathcal{J}$ is a second generalized Hermitian structure of $(W,G)$, which commutes with $\mathcal{J}$ and $H=-\mathcal{J}\circ\mathcal{J}'$.
If $\mathcal{J}$ is given, $W^c_\pm=L_\pm\oplus\bar{L}_\pm$, where the terms are $\pm i$-eigenspaces of the complex structures $\mathcal{J}_\pm$ induced by $\mathcal{J}$ in $W_\pm$ \cite{Galt}. It follows that the generalized Hermitian structure is equivalent with a decomposition (see also \cite{BS})
\begin{equation}\label{descptKgen}\mathbf{W}^c=L_+\oplus L_-\oplus \bar{L}_+\oplus\bar{L}_-\end{equation}
such that: (i) $L_+\oplus L_-, L_+\oplus \bar{L}_-$ are Dirac subspaces, (ii) $L_+\oplus \bar{L}_+, L_-\oplus \bar{L}_-$ are the complexification of the $\pm1$-eigenspaces of a product structure $H$ associated to a generalized Euclidean metric $G$. From (i), (ii) it follows that the terms of the decomposition are $g$-isotropic subspaces of equal dimensions.
The spaces in condition (i) define the generalized complex structures $\mathcal{J},\mathcal{J}'$.
A space $W$ endowed with a generalized Hermitian structure $(G,\mathcal{J})$ is a {\it generalized Hermitian space}.

Here are a few other important facts \cite{Galt}. The projections of the structures $\mathcal{J}_\pm$ to $W$ are complex structures $J_\pm$ such that $(\gamma,J_\pm)$, where $G$ corresponds to the pair $(\gamma,\psi)$, are classical Hermitian structures. Thus, the generalized Hermitian structures are in a bijective correspondence with quadruples $(\gamma,\psi,J_\pm)$.
The structure $(G,\mathcal{J}')$ corresponds to the quadruple
$(\gamma,\psi,J_+,-J_-)$.

Furthermore \cite{VgenS}, if $\mathcal{J}$ is given by (\ref{J}), we get
\begin{equation}\label{JApi}
J_\pm=pr_{W_\pm}\circ\mathcal{J}_\pm\circ\tau_\pm =A+\sharp_\pi\circ\flat_{\psi\pm\gamma},\end{equation}
which yields the classical tensors of a generalized Hermitian structure:
\begin{equation}\label{clasicK}
\begin{array}{l} \sharp_\pi=\frac{1}{2}(J_+-J_-)\circ\sharp_\gamma,\; \sharp_{\pi'}=\frac{1}{2}(J_++J_-)\circ\sharp_\gamma,\vspace*{2mm}\\ A=\frac{1}{2}[J_+\circ(Id+\varphi)+J_-\circ(Id-\varphi)],\vspace*{2mm}\\
A'=\frac{1}{2}[J_+\circ(Id+\varphi)-J_-\circ(Id-\varphi)],\vspace*{2mm}\\  \flat_\sigma=\flat_\gamma\circ(A\circ\varphi-\varphi\circ A+\sharp_\pi\circ\flat_\beta),\vspace*{2mm}\\ \flat_{\sigma'}=\flat_\gamma\circ(A'\circ\varphi-\varphi\circ A'+\sharp_{\pi'}\circ\flat_\beta).\end{array}\end{equation}
\begin{example}\label{exHlin} {\rm We may give a classical, Hermitian structure $(\gamma,J_+)$ on a space $W^{2n}$ by an adapted frame in $W^c$, i.e., line matrices of complex vectors $e=(e_1,...,e_{n}),\bar{e}=(\bar{e}_1,...,\bar{e}_{n})$ such that
\begin{equation}\label{auxnon}J_+e=ie,\, \gamma(e_i,e_j)=\gamma(\bar{e}_i,\bar{e}_j)=0,\,\gamma(e_i,\bar{e}_j)=\delta_{ij},
\end{equation}
where $J_+,\gamma$ are extended to $W^c$ by complex linearity. A corresponding, real, $\gamma$-orthonormal basis of $W$ is given by
\begin{equation}\label{realbasis} w_i=\frac{1}{\sqrt{2}}(e_i+\bar{e}_i),\,J_+w_i=\frac{i}{\sqrt{2}}(e_i-\bar{e}_i),\;
i=1,...,2n.\end{equation}
Then, a second $\gamma$-Hermitian structure $J_-$ will be determined by a frame $(f=e\Theta+\bar{e}\Psi,\bar{f}=\bar{e}\bar{\Theta}+e\bar{\Psi})$, where $\Theta,\Psi$ are complex $(n,n)$-matrices such that the following conditions hold (a)
$rank\left(\begin{array}{cc}\Theta,\bar{\Psi}\\ \Psi,\bar{\Theta}\end{array}\right)=2n$, (b) $\gamma(f_i,f_j)=0$. Condition (a) shows linear independence and the equality $span\{f\}\cap span\{\bar{f}\}=0$, therefore, $J_-f=if$ defines a complex structure on $W$. Condition (b) is equivalent to $\Theta^*\Psi+\Psi^*\Theta=0$ and ensures the compatibility property $\gamma(J_-X,J_-Y)=\gamma(X,Y)$, $\forall X,Y\in W$. The quadruple $(\gamma,\psi,J_+,J_-)$, where $\psi\in\wedge^2W^*$ defines a linear generalized Hermitian structure on $W$.}\end{example}

Above, we have referred to linear generalized Hermitian structures. Moving over to manifolds $M$, we get the corresponding notion of an {\it almost generalized Hermitian structure} that will make $M$ an {\it almost generalized Hermitian manifold}.

If the two structures $\mathcal{J},\mathcal{J}'$ are integrable, the almost generalized Hermitian structure is called a {\it generalized K\"ahler structure} and it makes $M$ a {\it generalized K\"ahler manifold}. The double integrability condition is equivalent to the closure of the spaces $\Gamma L_\pm$ under the Dirac bracket \cite{Galt} ($\Gamma$ denotes the space of smooth cross sections of a vector bundle). Furthermore, $M$ is a generalized K\"ahler manifold iff the classical structures $J_\pm$ are integrable and
\begin{equation}\label{KalercuLC} (\nabla_XJ_\pm)Y=\pm\frac{1}{2}\sharp_\gamma[i(X)i(J_\pm Y)d\psi
-(i(Y)i(X)d\psi)\circ J_\pm],\end{equation}
where $\nabla$ is the Levi-Civita connection of the metric $\gamma$ of the quadruple $(\gamma,\psi,J_\pm)$ of $M$ \cite{{Galt},{VgenS}}.
\section{Submanifolds with induced generalized complex structure}
The basic notion required for the discussion of induced generalized complex structures is that of a pullback space. This notion is defined as follows. Let $V,W$ be finite-dimensional linear spaces and put $\mathbf{V}=V\oplus V^*,\mathbf{W}=W\oplus W^*$. Consider a linear mapping $f_*:V\rightarrow W$ with transposed mapping $f^*:W^*\rightarrow V^*$. If $U$ is a subspace of $\mathbf{W}$, its {\it pullback} is
\begin{equation}\label{pullbackgeneral}
\overleftarrow{f}^*U=\{(X,f^*\eta)\,/\,(f_*X,\eta)\in U,\;X\in V,\eta\in W^*\}\subseteq\mathbf{V}.\end{equation}
(We added an arrow to the usual notation in order to avoid confusion with set-theoretic image.)
In particular, the pullback of a Dirac subspace of $\mathbf{W}$ is a Dirac subspace of $\mathbf{V}$ (e.g., \cite{BB}).

Now, we define the notion of an induced structure.
\begin{defin}\label{defsbvarc} {\rm\cite{BB} Let $W$ be a linear space endowed with a linear generalized complex structure $\mathcal{J}$ with the $i$-eigenspace $L$. A subspace $\iota_*:V\subseteq W$ is a {\it subspace with induced structure} or {\it generalized complex subspace} if the pullback space $\tilde{L}=\overleftarrow{\iota}^*L$ defines a linear generalized complex structure $\tilde{\mathcal{J}}$ on $V$. $\tilde{\mathcal{J}}$ will be called the {\it induced structure}. A submanifold $\iota:N\hookrightarrow M$ of an almost generalized complex manifold $(M,\mathcal{J})$ is a {\it submanifold with induced structure} $\tilde{\mathcal{J}}$ ({\it generalized almost complex submanifold}) if, $\forall x\in N$, $\iota_{*x}:T_xN\subseteq (T_{\iota(x)},\mathcal{J}_x)$ is a subspace with induced structure $\tilde{\mathcal{J}}_x$ and the field of induced structures is a smooth bundle endomorphism $\tilde{\mathcal{J}}$ of $\mathbf{T}N$.}\end{defin}

The existence of the induced structure is equivalent to $\overleftarrow{\iota}^*L\cap{\overleftarrow{\iota}^*\bar{L}}=0$ and in the case of submanifolds one must add the smoothness condition. From (\ref{pullbackgeneral}) and (\ref{eqluiL}) we get
\begin{equation}\label{eqluiL1} \begin{array}{l}
\overleftarrow{\iota}^*L=\{(X+iX',\iota^*\eta-i\iota^*(\flat_\sigma \iota_*X-\eta\circ A))
\vspace*{2mm}\\ /\,X,X'\in V,\,\eta\in W^*,\,\iota_*X'=-A\iota_*X-\sharp_\pi\eta\}.
\end{array}\end{equation}

For the subspace $\iota_*:V\subseteq W$ we identify
$$V\approx im\,\iota_*,\, V^*=im\,\iota^*\approx W^*/ker\,\iota^*,\,ker\,\iota^*\approx ann\,V,\,\mathbf{V}\approx B/ann\,V,$$
where $B=V\oplus W^*$. The last quotient exists since
$ker\,\iota^*=B^{\perp_g}\subseteq B$ and the isomorphism is induced by the natural projection $s:B\rightarrow B/ker\,\iota^*$ \cite{BS}.

In \cite{IV} it was shown that the subspace $V\subseteq(W,\mathcal{J})$ has the induced structure iff
\begin{equation}\label{gcsusp} V\cap\sharp_\pi(ann\,V)=0,\, A(V)\subseteq V+ im\,\sharp_\pi\end{equation} and, then, one also has
\begin{equation}\label{condcuoplus} V+ im\,\sharp_\pi=V\oplus\sharp_\pi(ann\,V).\end{equation}
The meaning of the first condition (\ref{gcsusp}) is that $V$ is a {\it Poisson-Dirac subspace} of $W$ in the sense of \cite{CF}. By definition, this means that $\overleftarrow{f}^*(graph\,\sharp_\pi)$ ($\sharp_\pi:W^*\rightarrow W$) is the graph of $\sharp_{\tilde{\pi}}:V^*\rightarrow V$ for some bivector $\tilde{\pi}\in\wedge^2V$. The bivector $\tilde{\pi}$ may be calculated as follows. The first condition (\ref{gcsusp}) is equivalent to
\begin{equation}\label{auxPD} ann\,V+[ann\,\sharp_\pi(ann\,V)]=W^* \hspace{3mm}(ann\,\sharp_\pi(ann\,V)=(ann\,V)^{\perp_\pi}).
\end{equation}
By (\ref{auxPD}), $ann\,V$ has a $\pi$-orthogonal complement $\Pi$ in $W^*$ and $V^*\approx\Pi$, $\tilde{\pi}=\pi|_{\Pi}$. The classical tensors of the induced structure are $\tilde{\pi}, \tilde{A}=pr_V\circ A, \flat_{\tilde{\sigma}}=pr_\Pi\circ\flat_\sigma$ \cite{IV}.

In \cite{BS} it was shown that $V$ is a generalized complex subspace of $(W,\mathcal{J})$ iff one of the following equivalent conditions holds: (a) $B=B\cap\mathcal{J}B+ann\,V$, (b) $\mathcal{J}B\subseteq B+\mathcal{J}(ann\,V)$, (c) $(\mathcal{J}B)^{\perp_g}\cap B\subseteq ann\,V$.
\begin{example}\label{exindcstr} {\rm
Let $V$ be a subspace such that $\mathcal{J}B\subseteq B$, equivalently, since $ann\,V=B^{\perp_g}$, $\mathcal{J}(ann\,V)\subseteq ann\,V$ ($V$ is a {\it totally invariant subspace} or has an {\it invariant conormal space}) \cite{{BB},{Goto}}. Then, $AV\subseteq V,im\,\sharp_\pi\subseteq V$ and (\ref{gcsusp}) holds. For a second example, notice that, if $\pi$ is non degenerate, the first condition (\ref{gcsusp}) suffices for $V$ to be a generalized complex subspace of $(W,\mathcal{J})$. In particular, if $(W,\varpi)$ is a symplectic space and $W=V\oplus V^{\perp_\varpi}$, then, $V$ has a generalized complex structure induced by that of $W$, where $A=0,\sigma=\varpi,\pi=-\varpi^{-1}$, as well as by any $B$-field transform of the former (the action of the $B$-field transformation on $\mathcal{J}$ is defined by $\mathcal{J}\mapsto\mathcal{B}^{-1}\mathcal{J}\mathcal{B}$, where
$\mathcal{B}=\left(\begin{array}{ll} Id&0\\ \flat_B&Id\end{array}\right)$) \cite{{BB},{IV}}. The $B$-field transformation changes $A$ to $A-\varpi^{-1}\circ\flat_B$ and, since it is easy to construct a $2$-form $B$ such that $\varpi^{-1}\circ\flat_B(V)\nsubseteq V$, we have an example where a subspace with induced structure is not $A$-invariant. Finally, assume that the generalized complex space $(W,\mathcal{J})$ is endowed with a $\pi$-preserving linear involution $\phi_*$ ($\phi_*^2=Id$) such that $A(\phi_* X)-\phi_*(AX)\in im\,\sharp_\pi$, $\forall X\in W$. Then, it is easy to check the equalities (\ref{gcsusp}) for the subspace $V=\{X+\phi_*X\,/\,X\in W\}$ of the fixed vectors of $\phi_*$, which, therefore, has the induced generalized complex structure.}\end{example}

It is known that, if $\iota:N\hookrightarrow(M,\mathcal{J})$, where $\mathcal{J}$ is integrable, has the induced structure, the latter is integrable too and we have a {\it generalized complex submanifold} \cite{BB}. On the other hand, we give the following criterion of smoothness.
\begin{prop}\label{sbvardif} Let $N\subseteq (M,\mathcal{J})$ be a submanifold of a generalized almost complex manifold such that, $\forall x\in N$, the subspace $T_xN\subseteq T_xM$ has the induced structure $\tilde{\mathcal{J}}_x$. Then, if $TN+im\,\sharp_\pi$ is a smooth subbundle of the restriction $T_NM$ of the tangent bundle $TM$ to $N$, the field of induced structures is smooth, and, if $\mathcal{J}$ is integrable, $\tilde{\mathcal{J}}$ is integrable too. \end{prop}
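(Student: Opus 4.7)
The plan is to verify smoothness of the induced generalized almost complex structure $\tilde{\mathcal{J}}$ tensor by tensor, using the classical triple $(\tilde A,\tilde\pi,\tilde\sigma)$ recalled from \cite{IV}, and then to deduce integrability from the Dirac-pullback principle cited just above the statement.

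For the projection-type tensors $\tilde A = pr_V\circ A$ and $\sharp_{\tilde\pi}$, the pivotal observation is equation (\ref{condcuoplus}): at each $x\in N$ one has $V+\mathrm{im}\,\sharp_\pi = V\oplus\sharp_\pi(\mathrm{ann}\,V)$ where $V=T_xN$. The hypothesis is precisely that $TN+\mathrm{im}\,\sharp_\pi$ is a smooth subbundle of $T_NM$, hence of constant rank. Since $TN$ itself is smooth of constant rank, the complementary summand $\sharp_\pi((TN)^\circ)$ also has constant rank; being the constant-rank image of the smooth bundle $(TN)^\circ$ under the smooth homomorphism $\sharp_\pi$, it is itself a smooth subbundle (locally spanned by the $\sharp_\pi$-images of any local frame of $(TN)^\circ$). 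Consequently the projector $pr_V: TN+\mathrm{im}\,\sharp_\pi\to TN$ along $\sharp_\pi((TN)^\circ)$ is a smooth bundle map, and $\tilde A=pr_V\circ A|_{TN}$ is smooth. For $\tilde\pi$, I would define $\sharp_{\tilde\pi}(\xi)=pr_V(\sharp_\pi\eta)$ where $\eta\in T^*_NM$ is any smooth local lift of $\xi\in V^* \simeq T^*_NM/(TN)^\circ$; two lifts differ by a section of $(TN)^\circ$ whose $\sharp_\pi$-image lies in $\ker pr_V$, so the assignment is independent of the lift and smooth.

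The third tensor $\tilde\sigma$ requires no hypothesis: unwinding $\flat_{\tilde\sigma}=pr_\Pi\circ\flat_\sigma$ on $V$ and evaluating at $Y\in V$ annihilates the $\mathrm{ann}\,V$ part of $\flat_\sigma X$, giving $\tilde\sigma(X,Y)=\sigma(\iota_*X,\iota_*Y)$, i.e., $\tilde\sigma=\iota^*\sigma$, which is automatically smooth.

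Once smoothness is in hand, $\tilde L=\overleftarrow{\iota}^*L$ is a smooth complex subbundle of $\mathbf{T}^cN$ and coincides with the $+i$-eigenbundle of $\tilde{\mathcal{J}}$. When $\mathcal{J}$ is integrable, $L$ is a complex Dirac structure on $M$, and the already-cited fact that the pullback of a Dirac structure along an inclusion is a Dirac structure (\cite{BB}) closes $\tilde L$ under the Courant bracket of $N$, equivalently $\tilde{\mathcal{J}}$ is integrable. I expect the only genuinely subtle step to be verifying that $\sharp_\pi((TN)^\circ)$ is a smooth subbundle: the hypothesis feeds in exactly there through the constant-rank decomposition (\ref{condcuoplus}), and once this smooth complement to $TN$ inside $TN+\mathrm{im}\,\sharp_\pi$ is secured, the rest reduces to formal manipulation of the tensor formulas and the known Dirac-pullback principle.
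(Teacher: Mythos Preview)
Your proposal is correct but takes a different route from the paper. The paper first derives a single closed formula for the induced structure, $\tilde{\mathcal{J}}(s(X,\alpha))=s(\mathcal{J}(X,\theta-\nu))$, via the projection $s:B\to\mathbf{V}$ and the decompositions $AX=X'+\sharp_\pi\nu$, $\alpha=\eta+\theta$; smoothness is then argued by showing that the hypothesis makes $(ann\,TN)\cap\ker\sharp_\pi$ a smooth subbundle of $ann\,TN$, so that a smooth choice of $\nu$ is available. You instead check smoothness tensor by tensor from the formulas $\tilde A=pr_V\circ A$, $\sharp_{\tilde\pi}(\xi)=pr_V(\sharp_\pi\eta)$, $\tilde\sigma=\iota^*\sigma$ cited from \cite{IV}. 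Both arguments hinge on exactly the same linear-algebraic observation: by (\ref{condcuoplus}) together with the hypothesis, $\sharp_\pi((TN)^\circ)$ has constant rank and is a smooth complement to $TN$ inside $TN+im\,\sharp_\pi$, so the projector $pr_V$ is a smooth bundle map; and both invoke the Dirac-pullback result of \cite{BB} for integrability. The paper's route has the advantage of producing the explicit expression (\ref{strcindusa2}) for $\tilde{\mathcal{J}}$, which is of independent interest and is reused (Remark \ref{obsexprJind}, the projected-structure discussion); your route is more direct if one only wants the smoothness criterion. One small imprecision in your write-up: the $\sharp_\pi$-images of an arbitrary local frame of $(TN)^\circ$ need not be linearly independent; the correct statement is that $\sharp_\pi|_{(TN)^\circ}$ has constant rank (since both $(TN)^\circ$ and its image have constant rank), and a constant-rank bundle morphism has smooth image.
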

\begin{proof} Recall that calligraphic letters denote pairs in $TM\oplus T^*M$. We come back to linear subspaces $V\subseteq(W,\mathcal{J})$ and look for a general expression of the induced structure $\tilde{\mathcal{J}}$. Property (a) mentioned above yields $\mathbf{V}=s(B\cap\mathcal{J}B)$, therefore, for all  $s\mathcal{X}\in\mathbf{V}$ there exists $\mathcal{Y}\in B\cap(\mathcal{J}B)$ (equivalently, $\mathcal{Y}\in B$ and $\mathcal{Y}\in\mathcal{J}B$) such that $s\mathcal{X}=s\mathcal{Y}$. Accordingly, we get
\begin{equation}\label{strcindusa} \begin{array}{l}
\tilde{\mathcal{J}}(s\mathcal{X})=\tilde{\mathcal{J}}(s\mathcal{Y})=\frac{1}{2}
\tilde{\mathcal{J}}[s(\mathcal{Y}-i\mathcal{J}\mathcal{Y}) +s(\mathcal{Y}+i\mathcal{J}\mathcal{Y})]\vspace*{2mm}\\ \hspace*{13mm}=
\frac{i}{2}[s(\mathcal{Y}-i\mathcal{J}\mathcal{Y})-s(\mathcal{Y}+i\mathcal{J}\mathcal{Y})]
=s(\mathcal{J}\mathcal{Y}),
\end{array}\end{equation}
where the third equality holds because the terms belong to the $\pm i$-eigenspaces of $\tilde{\mathcal{J}}$ (check that $\overleftarrow{\iota}^*L=s(L\cap B)$ and use $\mathcal{Y},\mathcal{J}\mathcal{Y}\in B$).

For $\mathcal{X}=(X,\alpha)\in B$, a corresponding $\mathcal{Y}$ may be obtained as follows. In view of the second relation (\ref{gcsusp}) and of (\ref{condcuoplus}), there exists a decomposition $AX=X'+\sharp_\pi\nu$, where $X'\in V,\nu\in ann\,V$. On the other hand, (\ref{auxPD}) yields a decomposition $\alpha=\eta+\theta$, where $\eta\in ann\,V,\theta\in ann\,\sharp_\pi(ann\,V)$, hence, $\sharp_\pi\theta\in V$. Using these decompositions and the expression (\ref{J}), we see that
$$pr_V[\mathcal{J}(X,\theta-\nu)]=AX-\sharp_\pi\nu+\sharp_\pi\theta =X'+\sharp_\pi\theta\in V,$$ therefore, $\mathcal{J}(X,\theta-\nu)\in B$ and $(X,\theta-\nu)\in B\cap(\mathcal{J}B)$. Since $\eta,\nu\in ann\,V$, we have $s(X,\alpha)=s(X,\theta-\nu)$ and it follows that we may take $\mathcal{Y}=(X,\theta-\nu)$ and the induced structure is given by
\begin{equation}\label{strcindusa2} \tilde{\mathcal{J}}(s(X,\alpha))=s(\mathcal{J}(X,\theta-\nu)).\end{equation}

Now, we go to the manifolds $N\subseteq M$ of the proposition. Smoothness of $\tilde{\mathcal{J}}$ means that,
if we apply $\tilde{\mathcal{J}}$ to smooth, local, cross sections of $\mathbf{T}N$ we get smooth cross sections again. We use formula (\ref{strcindusa2}), where $X$ is a smooth, local vector field on $N$ and $\alpha$ is a local $1$-form on $M$. Then, we may assume that the terms $X',\sharp_\pi\nu,\eta,\theta$ of the corresponding decompositions used to get (\ref{strcindusa2}) are smooth. Furthermore, since $ann(im\,\sharp_\pi)=ker\,\sharp_\pi$, if $TN+im\,\sharp_\pi$ is a subbundle of $T_NM$, $$ann(TN+im\,\sharp_\pi)=(ann\,TN)\cap(ker\,\sharp_\pi)$$ is a subbundle of $ann\,TN$ and $\sharp_\pi(ann\,TN)\approx ann\,TN/[(ann\,TN)\cap(ker\,\sharp_\pi)]$. This isomorphism tells us that the smooth local vector field $\sharp_\pi\nu$ may be defined with a smooth form $\nu\in ann\,TN$ and, then, (\ref{strcindusa2}) shows the smoothness of $\tilde{\mathcal{J}}$. The last assertion follows from the observation that preceded the proposition. Q.e.d. \end{proof}
\begin{rem}\label{obsexprJind} {\rm Another expression of $\tilde{\mathcal{J}}$ follows from property (b) which yields decompositions
$$
\mathcal{J}\mathcal{X} =\mathcal{X}'+\mathcal{J}\eta_0,\; \mathcal{X}\in B, \mathcal{X}'\in B,\eta_0\in ann\,V.
$$
Then, $\mathcal{Y}=\mathcal{X}-\eta_0\in B\cap(\mathcal{J}B)$ and $s\mathcal{Y}=s\mathcal{X}$. Therefore, by (\ref{strcindusa}),
$$ \tilde{\mathcal{J}}(s\mathcal{X})=
s(\mathcal{J}(\mathcal{X}-\eta_0)). $$
}\end{rem}

In the remaining part of this section we will discuss other types of mappings.
\begin{defin}\label{goodback} {\rm The linear mapping $f_*:V\rightarrow(W,\mathcal{J})$ is {\it backward regular} if $\overleftarrow{f}^*L\cap \overleftarrow{f}^*\bar{L}=(ker\,f_*)^c$ (which is the smallest possible intersection). The smooth mapping $f:N\rightarrow(M,\mathcal{J})$ is {\it backward regular} if $f_{*x}$ is backward regular for all $x\in N$ and the field $\overleftarrow{f}_x^*L$ is a smooth vector bundle.}\end{defin}
\begin{prop}\label{quasiprop} The mapping $f_*$ is backward regular iff it satisfies the conditions
\begin{equation}\label{1ptquasi} im\,f_*\cap(\sharp_\pi(ker\,f^*))=0,\;
A(im\,f_*)\subseteq im\,f_*+im\,\sharp_\pi.\end{equation}\end{prop}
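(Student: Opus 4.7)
The plan is to reduce the statement to the criterion (\ref{gcsusp}) for generalized complex subspaces, applied to $V' := im\,f_* \subseteq W$. I would factor $f_* = \iota \circ \bar{f}$, where $\bar{f} : V \to V'$ is the induced surjection and $\iota : V' \hookrightarrow W$ is the inclusion. Dually $f^* = \bar{f}^* \circ \iota^*$; the map $\iota^* : W^* \to (V')^*$ is surjective (dual of the injection $\iota$) while $\bar{f}^* : (V')^* \to V^*$ is injective (dual of the surjection $\bar{f}$). Writing $f^*\eta = \bar{f}^*(\iota^*\eta)$ and letting $\beta = \iota^*\eta$ range over $(V')^*$, the definition (\ref{pullbackgeneral}) gives the identity
\[ \overleftarrow{f}^* L = \{(X, \bar{f}^*\beta) \,:\, X \in V,\ (\bar{f}X, \beta) \in \overleftarrow{\iota}^* L\}, \]
and likewise for $\bar{L}$.

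Next I intersect these two descriptions. If $(X, \bar{f}^*\beta_1) = (X, \bar{f}^*\beta_2)$ lies in $\overleftarrow{f}^* L \cap \overleftarrow{f}^* \bar{L}$ with $(\bar{f}X,\beta_1) \in \overleftarrow{\iota}^* L$ and $(\bar{f}X,\beta_2) \in \overleftarrow{\iota}^* \bar{L}$, then injectivity of $\bar{f}^*$ forces $\beta_1 = \beta_2 =: \beta$, so
\[ \overleftarrow{f}^* L \cap \overleftarrow{f}^* \bar{L} = \{(X, \bar{f}^*\beta) \,:\, (\bar{f}X, \beta) \in \overleftarrow{\iota}^*L \cap \overleftarrow{\iota}^* \bar{L}\}. \]
Backward regularity, i.e.\ equality of this intersection with $(ker\,f_*)^c$, is therefore equivalent to $\overleftarrow{\iota}^* L \cap \overleftarrow{\iota}^* \bar{L} = 0$: one direction is immediate (the right hand side forces $\bar{f}X = 0$ and $\beta = 0$, recovering exactly $(ker\,f_*)^c$), and the other follows by lifting a nonzero element $(Y,\beta)$ of the $\iota$-side along the surjection $\bar{f}$ to a pair $(X, \bar{f}^*\beta)$ on the $f$-side.

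Hence $f_*$ is backward regular iff $V' = im\,f_*$ is a generalized complex subspace of $(W, \mathcal{J})$ in the sense of Definition \ref{defsbvarc}. Applying the criterion (\ref{gcsusp}) to $V'$, and noting that $ann(im\,f_*) = ker\,f^*$, one obtains exactly (\ref{1ptquasi}). The main (mild) obstacle is the lifting step above: when $\beta \ne 0$ the lifted form $\bar{f}^*\beta$ is nonzero by injectivity of $\bar{f}^*$, and when $\beta = 0$ one necessarily has $Y \ne 0$, so any preimage $X$ with $\bar{f}X = Y$ lies outside $ker\,f_*$; in either case the lift escapes $(ker\,f_*)^c$, closing the equivalence.
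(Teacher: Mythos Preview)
Your proof is correct and takes a genuinely different route from the paper's. The paper computes $\overleftarrow{f}^*L\cap\overleftarrow{f}^*\bar{L}$ directly from the explicit description (\ref{eqluiL1}) of $\overleftarrow{f}^*L$ in terms of the classical tensors $A,\pi,\sigma$, obtaining a system of conditions (\ref{LcapbarL})--(\ref{LauxbarL2}) and then arguing, with the help of the algebraic identities (\ref{2-reddezv}), that this system reduces to $(ker\,f_*)^c$ precisely when (\ref{1ptquasi}) holds. You instead factor $f_*=\iota\circ\bar{f}$ through $V'=im\,f_*$, use functoriality of pullback together with injectivity of $\bar{f}^*$ to identify $\overleftarrow{f}^*L\cap\overleftarrow{f}^*\bar{L}=\overleftarrow{\bar{f}}^*(\overleftarrow{\iota}^*L\cap\overleftarrow{\iota}^*\bar{L})$, and then observe that this equals $(ker\,f_*)^c$ iff $\overleftarrow{\iota}^*L\cap\overleftarrow{\iota}^*\bar{L}=0$, i.e., iff $V'$ is a generalized complex subspace. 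Invoking the already-established criterion (\ref{gcsusp}) for $V'$, and using $ann(im\,f_*)=ker\,f^*$, gives (\ref{1ptquasi}) immediately. Your argument is shorter and explains structurally why (\ref{1ptquasi}) is nothing but (\ref{gcsusp}) for $im\,f_*$; the paper's computation is more self-contained (it does not rely on (\ref{gcsusp})) and yields the explicit intermediate description (\ref{LcapbarL}), at the cost of some manipulations with (\ref{2-reddezv}) that your approach bypasses entirely.
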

\begin{proof} Using (\ref{eqluiL1}) with $\iota$ replaced by $f$, we get
\begin{equation}\label{LcapbarL} \begin{array}{l}
\overleftarrow{f}^*L\cap \overleftarrow{f}^*\bar{L}= \{(X+iX',f^*\eta-if^*(\flat_\sigma f_*X-\eta\circ A))
\vspace*{2mm}\\ /\,f^*(\flat_\sigma f_*X-\eta\circ A)=f^*(\nu\circ A),\vspace*{2mm}\\
\,f_*X'=-Af_*X-\sharp_\pi\eta,\vspace*{2mm}\\ f_*X'=Af_*X +\sharp_\pi\eta +2\sharp_\pi\nu,\,\nu\in ker\,f^*\}.
\end{array}\end{equation}
The last two conditions are equivalent to
\begin{equation}\label{LauxbarL2} f_*X'=\sharp_\pi\nu,\,Af_*X+\sharp_\pi\eta=-\sharp_\pi\nu.
\end{equation}

Backward regularity occurs iff the triple of conditions included in (\ref{LcapbarL}) hold only for $f_*X'=f_*X=0,
f^*\eta=0,f^*(\eta\circ A)=0.$ For $X'$, the only condition is that given by the first equality (\ref{LauxbarL2}) and it implies $f_*X'=0$ iff the first condition (\ref{1ptquasi}) holds.
Then, if we apply $A$ to the second condition (\ref{LauxbarL2}) and use (\ref{2-reddezv}) and the first condition in (\ref{LcapbarL}), we see that the first condition (\ref{1ptquasi}) also implies $f_*X=0$. Therefore, after imposing the first condition (\ref{1ptquasi}), what remains from (\ref{LauxbarL2}) is $\sharp_\pi\eta=0$, which, in view of (\ref{2-reddezv}), also implies $\sharp_\pi(\eta\circ A)=0$.
Now, we see that $\eta$ satisfies the requirements $f^*\eta=0,f^*(\eta\circ A)=0$ iff
$$ker\,\sharp_\pi\subseteq ker\,f^*\cap ker(f^*\circ\hspace{1.5mm}^t\hspace{-1.5mm}A).$$
Taking above the $g$-orthogonal spaces we get an equivalent condition that exactly is the second condition (\ref{1ptquasi}). Q.e.d. \end{proof}

Now, we shall refer to the dual case. For the linear surjection $f_*:V\rightarrow W$ (also considered in \cite{BB}), we have $ker\,f^*=0$ and we identify $$W=im\,f_*\approx V/ker\,f_*,\, W^*\approx ann(ker\,f_*)\subseteq V^*,\,\mathbf{W}=B/ker\,f_*;$$ we still have $B=V\oplus W^*$, but, $ker\,f_*\subseteq B$ and there exists a projection	 $q:B\rightarrow B/ker\,f_*$.

We recall the definition of the {\it push-forward} operation applied to a subspace $S\subseteq\mathbf{V}$, namely,
\begin{equation}\label{pushforward}
\overrightarrow{f}_*S=\{(f_*X,\eta)\,/\,(X,f^*\eta)\in S,\;X\in V,\eta\in W^*\}\subseteq\mathbf{W}.
\end{equation}
This operation sends Dirac subspaces of $\mathbf{V}$ to Dirac subspaces of $\mathbf{W}$ (e.g., \cite{BB}).

In the following definition we apply the push-forward operation to generalized complex structures.
\begin{defin}\label{projstr} {\rm Let $(V,\mathcal{J})$ be a generalized complex linear space with the $i$-eigenspace $L$. We say that the surjection $f_*:(V,\mathcal{J})\rightarrow W$ has {\it image with projected structure} if $\overrightarrow{f}_*L\cap\overrightarrow{f}_*\bar{L}=0$, where $L$ is the $i$-eigenspace of $\mathcal{J}$.}\end{defin}

From (\ref{pushforward}), it follows that the surjection $f_*:V\rightarrow W$ has the image with projected structure iff the injection $f^*:W^*\rightarrow(V^*,\mathcal{J}^*)$ induces a generalized complex structure on the subspace $W^*\subseteq V^*$. This transposition means that we look at the pairs as $(\alpha,X)$ instead of $(X,\alpha)$, which has the effect of interchanging $\pi$ with $\sigma$ and making the push-forward with respect to the mapping $f_*$ become the pullback with respect to the mapping $f^*$.
If we transpose the existence conditions of the induced structure, we get the following results.
\begin{prop}\label{condproject} {\rm1.} The image $W$ of the surjection $f_*:(V,\mathcal{J})\rightarrow W$ has the projected generalized complex structure $\tilde{\mathcal{J}}$ in the sense of Definition {\rm\ref{projstr}} iff
\begin{equation}\label{goodsur} W^*\cap\flat_\sigma(ker\,f_*)=0,
\,A^*(W^*)\subseteq W^*+im\,\flat_\sigma,\end{equation} equivalently,
\begin{equation}\label{auxsur} V=ker\,f_*
+ann(\flat_\sigma(ker\,f_*)),\,A^*(W^*)\subseteq W^*\oplus\flat_\sigma(ker\,f_*).\end{equation}
{\rm2.} The existence of the projected structure is also characterized by each of the following equivalent conditions {\rm(a*)} $B=B\cap(\mathcal{J}B)+ker\,f_*$, {\rm(b*)} $\mathcal{J}B\subseteq B+\mathcal{J}(ker\,f_*)$, {\rm(c*)} $(\mathcal{J}B)^{\perp_g}\cap B\subseteq ker\,f_*$.
\end{prop}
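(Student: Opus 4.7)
The plan is to exploit the transposition remark made immediately before Definition \ref{projstr}: the surjection $f_*:V\rightarrow W$ has image with projected structure if and only if the injection $f^*:W^*\hookrightarrow V^*$ admits an induced generalized complex structure for the structure $\mathcal{J}^*$ on $V^*$ obtained from $\mathcal{J}$ by reading pairs as $(\alpha,X)$ in place of $(X,\alpha)$. Since both parts of the proposition are just the transposed versions of the conditions (\ref{gcsusp})--(\ref{condcuoplus}) and (a), (b), (c) already established for the subspace case, the whole statement will follow by carrying out this translation carefully.

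First I would record how the classical tensors transform under the swap of the two factors of $\mathbf{V}=V\oplus V^*$. Conjugating the block form (\ref{J}) by the swap permutation gives the matrix of $\mathcal{J}^*$ with $-A^*$ in the upper-left position, $\flat_\sigma$ in the upper-right position, and $\sharp_\pi$ in the lower-left. Reading this against (\ref{J}) for the underlying space $V^*$, the new triple of classical tensors is $(-A^*,\sigma,\pi)$: the $2$-form $\sigma$ now plays the role of the Poisson bivector (identified with a bivector on $V^*$), and the bivector $\pi$ now plays the role of the $2$-form on the new vector space. The algebraic relations (\ref{2-reddezv}) are preserved by this swap.

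Next I would apply the subspace conditions to $W^*\subseteq(V^*,\mathcal{J}^*)$. The annihilator of $W^*$ inside $(V^*)^*\simeq V$ is precisely $ker\,f_*$ (since $W^*=im\,f^*=ann(ker\,f_*)$), and the new $\sharp$-map out of this annihilator is $\flat_\sigma:V\rightarrow V^*$. Substituting into (\ref{gcsusp}) yields $W^*\cap\flat_\sigma(ker\,f_*)=0$ and $A^*(W^*)\subseteq W^*+im\,\flat_\sigma$ (the sign in $-A^*$ is irrelevant since we only ask that an image land in a subspace), which is exactly (\ref{goodsur}); substituting into (\ref{auxPD}) and (\ref{condcuoplus}) yields the two equivalent statements in (\ref{auxsur}). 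For part 2, the auxiliary space $B=V\oplus W^*$ used in (a), (b), (c) is invariant under the swap (once we use the identification $W^*\approx ann(ker\,f_*)\subseteq V^*$), $\mathcal{J}^*B$ corresponds to $\mathcal{J}B$, and the role formerly played by $ann\,V$ is now taken by the annihilator of $W^*$ in $V$, namely $ker\,f_*$. Replacing $ann\,V$ by $ker\,f_*$ in each of (a), (b), (c) produces (a*), (b*), (c*).

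The main obstacle will be keeping the dictionary of the transposition straight: one must verify that $im\,\sharp_\pi$ is sent to $im\,\flat_\sigma$, that $ann\,V$ is sent to $ker\,f_*$, and that the two descriptions of $B$ --- as an internal direct sum $V\oplus W^*$ in the subspace setting and as a subspace of $\mathbf{V}$ containing $ker\,f_*$ in the surjection setting --- agree under $W^*\approx ann(ker\,f_*)$. Once these matches are in place, the proposition is a direct restatement of the subspace characterization in the dual setting, and no further computation is needed.
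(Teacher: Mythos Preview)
Your proposal is correct and is exactly the approach the paper takes: the paper does not give a separate proof of this proposition but simply states, just before it, that ``if we transpose the existence conditions of the induced structure, we get the following results,'' relying on the dictionary $\pi\leftrightarrow\sigma$, $A\leftrightarrow-A^*$, $ann\,V\leftrightarrow ker\,f_*$ explained in the paragraph preceding Definition \ref{projstr}. Your write-up makes this transposition more explicit than the paper does, but the argument is the same.
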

\begin{prop}\label{exprJproj} Let $\tilde{\mathcal{J}}$ be  the (implicitly assumed to exist) projected generalized complex structure of $W$ defined by the surjection $f_*:(V,\mathcal{J})\rightarrow W$. For any pair $\mathcal{X}=(X,\alpha)\in B$, consider the following decompositions given {\rm(\ref{auxsur})} and {\rm(b*)} of Proposition {\rm\ref{condproject}}
$$A^*\alpha=\flat_\sigma Z+\nu,\,X=Z'+U,\,\mathcal{J}\mathcal{X}=\mathcal{X}'+\mathcal{J}\mathcal{X}_0,$$
where $Z,Z'\in ker\,f_*,\, U\in ann(\flat_\sigma(ker\,f_*),\,\nu\in W^*$ and $\mathcal{X}'\in B, \mathcal{X}_0\in ker\,f_*$. Then,
\begin{equation}\label{Jproject} \tilde{\mathcal{J}}(q(X,\alpha))=q(\mathcal{J}((U+Z,\alpha))\;(X,\alpha)\in B)
\end{equation} and also
$\tilde{\mathcal{J}}(q\mathcal{X})=q(\mathcal{X}') =q(\mathcal{J}(\mathcal{X}-\mathcal{X}_0))$.
\end{prop}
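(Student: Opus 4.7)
The plan is to dualize the proof of Proposition \ref{sbvardif}, replacing the inclusion $\iota$ by the surjection $f$ and the subspace operation $s$ by the quotient $q$. Under the identification $\mathbf{W}\approx B/ker\,f_*$, the $i$-eigenspace $\tilde{L}$ of $\tilde{\mathcal{J}}$ coincides with $q(L\cap B)$; this follows from (\ref{pushforward}) together with the isomorphism $W^*\approx ann(ker\,f_*)$. Condition (a*) of Proposition \ref{condproject} then gives $q(B\cap\mathcal{J}B)=q(B)=\mathbf{W}$, so every class $q\mathcal{X}$ can be rewritten as $q\mathcal{Y}$ for some $\mathcal{Y}\in B\cap\mathcal{J}B$.

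For any such $\mathcal{Y}$, both $\mathcal{Y}$ and $\mathcal{J}\mathcal{Y}$ lie in $B$, hence $\mathcal{Y}\pm i\mathcal{J}\mathcal{Y}\in B$; these combinations are $\mp i$-eigenvectors of $\mathcal{J}$, so $q(\mathcal{Y}-i\mathcal{J}\mathcal{Y})\in\tilde{L}$ and $q(\mathcal{Y}+i\mathcal{J}\mathcal{Y})\in\bar{\tilde{L}}$. Writing $q\mathcal{Y}=\frac{1}{2}[q(\mathcal{Y}-i\mathcal{J}\mathcal{Y})+q(\mathcal{Y}+i\mathcal{J}\mathcal{Y})]$ and applying $\tilde{\mathcal{J}}$ exactly as in (\ref{strcindusa}) produces the core identity
\[\tilde{\mathcal{J}}(q\mathcal{Y})=q(\mathcal{J}\mathcal{Y}),\qquad\mathcal{Y}\in B\cap\mathcal{J}B.\]
The two formulas then reduce to exhibiting convenient representatives $\mathcal{Y}$ of $q\mathcal{X}$ inside $B\cap\mathcal{J}B$.

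For the first formula, take $\mathcal{Y}=(U+Z,\alpha)$. Since $X=Z'+U$ and $Z,Z'\in ker\,f_*$, one has $\mathcal{X}-\mathcal{Y}=(Z'-Z,0)\in ker\,f_*$, so $q\mathcal{Y}=q\mathcal{X}$. To check $\mathcal{J}\mathcal{Y}\in B$, expand via (\ref{J}):
\[\mathcal{J}(U+Z,\alpha)=(A(U+Z)+\sharp_\pi\alpha,\,\flat_\sigma U+\flat_\sigma Z-A^*\alpha),\]
and substitute $A^*\alpha=\flat_\sigma Z+\nu$ to reduce the second component to $\flat_\sigma U-\nu$. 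As $\nu\in W^*$ by construction, it only remains to verify $\flat_\sigma U\in ann(ker\,f_*)$: for $Y\in ker\,f_*$, skew-symmetry of $\sigma$ gives $\flat_\sigma U(Y)=\sigma(U,Y)=-(\flat_\sigma Y)(U)$, which vanishes because $U\in ann(\flat_\sigma(ker\,f_*))$. Hence $\mathcal{Y}\in B\cap\mathcal{J}B$ and the core identity gives (\ref{Jproject}).

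For the alternative expression, take $\mathcal{Y}=\mathcal{X}-\mathcal{X}_0$. Then $\mathcal{Y}\in B$ since $\mathcal{X}_0\in ker\,f_*\subseteq B$, and (b*) of Proposition \ref{condproject} gives $\mathcal{J}\mathcal{Y}=\mathcal{X}'\in B$; also $q\mathcal{Y}=q\mathcal{X}$. The core identity then yields $\tilde{\mathcal{J}}(q\mathcal{X})=q\mathcal{X}'=q(\mathcal{J}(\mathcal{X}-\mathcal{X}_0))$. The only nonroutine step in the whole argument is the $\sigma$-skew-symmetry manipulation placing $\flat_\sigma U$ in $ann(ker\,f_*)$; everything else is the exact transpose of the injective case.
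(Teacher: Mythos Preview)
Your proof is correct and follows precisely the route the paper intends: the proposition is stated there without a written proof, as the transposition of (\ref{strcindusa2}) and Remark \ref{obsexprJind}, and you have carried out that dualization explicitly, including the check that $\mathcal{Y}=(U+Z,\alpha)$ and $\mathcal{Y}=\mathcal{X}-\mathcal{X}_0$ lie in $B\cap\mathcal{J}B$. The only detail you supply beyond the paper's implicit argument is the skew-symmetry verification that $\flat_\sigma U\in ann(ker\,f_*)$, which is indeed the natural counterpart of the step in the injective case ensuring $\sharp_\pi\theta\in V$.
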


The first condition (\ref{goodsur}) is equivalent to $\overrightarrow{f}_*(graph\,\flat_\sigma)=graph\,\flat_{\tilde{\sigma}}$ for some $2$-form $\tilde{\sigma}\in\wedge^2W^*$ ($\flat_\sigma:V\rightarrow V^*,\flat_{\tilde{\sigma}}:W\rightarrow W^*$). Then, formula (\ref{auxsur}) implies a decomposition $V=Q\oplus(ker\,f_*)$ with $Q\perp_\sigma(ker\,f_*)$, whence, we deduce $f^*\tilde{\sigma}=\sigma|_Q$. Furthermore, using (\ref{Jproject}) we obtain the tensors of the projected structure $\tilde{\mathcal{J}}$, namely $\tilde{\sigma}$ deduced above and, with the identification $W\approx Q$, $\tilde{A}=pr_{Q}\circ A, \sharp_{\tilde{\pi}}=pr_{Q}\circ\sharp_\pi$.

The conditions for a linear mapping $f_*:(V,\mathcal{J})\rightarrow W$ to be {\it forward regular} in the sense that $\overrightarrow{f}_*L\cap\overrightarrow{f}_*\bar{L}=(ker\,f^*)^c$
are those ensuring that $f^*:W^*\rightarrow(V^*,\mathcal{J}^*)$ is backward regular. Thus, they will be obtained by transferring (\ref{1ptquasi}) to $f^*$. The result is
$$ im\,f^*\cap\flat_\sigma (ker\,f_*)=0,\; A^*(im\,f^*)
\subseteq im\,f^*+im\,\flat_\sigma.$$

The case of a smooth manifold submersion $f:(N,\mathcal{J})\rightarrow M$ cannot be handled in the same way as the case of an immersion since the inverse image of $f^{-1}(x)$ ($x\in M$) is not just one point of $N$. Instead, we may proceed as follows. We will say that the structure $\mathcal{J}$ is {\it projectable} if, $\forall(X,f^*\eta)\in\Gamma\mathbf{T}N$, with $\eta\in\Omega^1(M)$ and $X\in\Gamma_{pr}N$, where $\Gamma_{pr}$ denotes the space of projectable vector fields, one has $\mathcal{J}(X,f^*\eta)=(X',f^*\eta')$ where $\eta'\in\Omega^1(M)$ and $X'\in\Gamma_{pr}N$. It is easy to see that this condition is equivalent to the projectability of the tensors $A,\sigma,\pi$, i.e., the existence of $f$-related tensors $\tilde{A},\tilde{\sigma},\tilde{\pi}$ on $M$, and there exists a projected structure $\tilde{\mathcal{J}}$ such that, $\forall y\in M$, $\tilde{L}_y=\overrightarrow{f}_{*x}L_x$ whenever $y=f(x)$.\\

Now, we shall consider a different issue. In the classical case, the induced structure is characterized by the fact that the immersion is a complex (holomorphic) mapping. The literature on generalized complex geometry contains several attempts to extend this notion \cite{{Cr},{Mil},{OP}}, which lead to rather restrictive situations and a really good notion of generalized complex mapping may not exist.

We shall refer to the linear case. Then, the case of smooth mappings $f:N\rightarrow M$ may be handled by applying the linear case definitions to the differential of $f$.

The linear mapping $f_*:(V,\mathcal{J}_V)\rightarrow (W,\mathcal{J}_W)$ is generalized complex in the sense of \cite{Cr} iff $A_W\circ f_*=f_*\circ A_V,f_*\pi_V=\pi_W,f^*\sigma_W=\sigma_V$. These conditions may not hold for induced structures.

The linear mapping $f_*:(V,\mathcal{J}_V)\rightarrow (W,\mathcal{J}_W)$ is generalized holomorphic in the sense of \cite{OP} if $f_*(pr_VL_V)\subseteq pr_WL_W$ and $f_*\pi_V=\pi_W$. Calculations show that $\pi_V,\pi_W$ are the Poisson structures considered in \cite{OP} and that the conditions of \cite{OP} are equivalent to
\begin{equation}\label{OPcond} \begin{array}{l} \sharp_{\pi_W}=f_*\circ\sharp_{\pi_V}\circ f^*, \, f_*(im\,\sharp_{\pi_V})\subseteq im\,\sharp_{\pi_W},\vspace*{2mm}\\
im(f_*\circ A_V-A_W\circ f_*)\subseteq im\,\sharp_{\pi_W}.
\end{array}\end{equation}
Again, these conditions may not hold for induced structures.

We do not discuss the definitions given in \cite{Mil}, leaving them to the reader. On the other hand, we propose a new definition.
\begin{defin}\label{holnou} {\rm Let $f_*:(V,\mathcal{J}_V)\rightarrow(W,\mathcal{J}_W)$ be a linear mapping. Then, $f_*$ is {\it generalized complex} if
\begin{equation}\label{genh} \overrightarrow{f}_*L_V\subseteq L_W+(ker\,f^*)^c,\,\overleftarrow{f}^*L_W\subseteq L_V+(ker\,f_*)^c.\end{equation}
}\end{defin}

By using formula (\ref{eqluiL}) in the definition of the bundles included in (\ref{genh}), we can see that $f_*$ is generalized complex in the new sense iff, $\forall X\in V,\eta\in W^*$, the following equivalence holds
\begin{equation}\label{eqgenh} \begin{array}{l}
(f^*A^*_W-A^*_Vf^*) \eta
=(f^*\flat_{\sigma_W}f_*-\flat_{\sigma_V})X\vspace*{2mm}\\
\Leftrightarrow\,(A_Wf_*-f_*A_V)X=(f_*\sharp_{\pi_V}f^*-
\sharp_{\pi_W})\eta. \end{array}\end{equation}

If $\sigma_V=0,\sigma_W=0,\pi_V=0,\pi_W=0$, condition (\ref{eqgenh}) becomes $(f^*A^*_W-A^*_Vf^*) \theta=0\Leftrightarrow (A_Wf_*-f_*A_V)X=0$. Since the first part of the equivalence holds for $\theta=0$, the second part must hold for any $X$, hence, we are exactly in the case of a classical complex mapping. By a similar argument, if $f_*$ is a generalized complex mapping and if the condition $A_W\circ f_*-f_*\circ A_V=0$ holds, then, we must also have $f^*\sigma_W=\sigma_V,f_*\pi_V=\pi_W$ and we are in the case of \cite{Cr}. In particular, the case $A_V=A_W=0$ shows that the generalized complex mappings between symplectic spaces (seen as generalized complex) are just the symplectic mappings.

If the inclusion $\iota_*:(V,\tilde{\mathcal{J}})\subseteq(W,\mathcal{J})$ is a generalized complex mapping in the sense of Definition \ref{holnou}, the second relation (\ref{genh}) shows that $V$ is a generalized complex subspace, where $\tilde{\mathcal{J}}$ is the induced structure.
Conversely, if the inclusion $\iota_*:V\subseteq(W,\mathcal{J}))$ induces a generalized complex structure $\tilde{\mathcal{J}}$, $\iota_*:(V,\tilde{\mathcal{J}})\subseteq(W,\mathcal{J})$ is a generalized complex mapping. Indeed, the second relation (\ref{genh}) obviously holds and we have to check that $\overrightarrow{\iota}_*(\overleftarrow{\iota}^*L)\subseteq L+(ker\,f^*)^c$. Since $\iota_*$ is injective, $(\iota_*X,\eta)\in \overrightarrow{\iota}_*\tilde{L}$ ($X\in V,\eta\in W^*$) iff $(X,\iota^*\eta)\in\overleftarrow{\iota}^*L$. The latter condition is equivalent to $\exists\eta^0\in (ker\,\iota^*)^c$ such that $(\iota_*X,\eta+\eta^0)=(Y,\zeta)\in L$. Accordingly, $(\iota_*X,\eta)=(Y,\zeta)+(0,-\eta^0)$ and we are done.
\section{Submanifolds with induced generalized K\"ahler structure}
In this section we discuss induced generalized almost Hermitian and generalized K\"ahler structures and we begin with the following result.
\begin{prop}\label{Gindus} Any subspace $V\subseteq(W,G)$, where $G$ is a generalized Euclidean metric, inherits a naturally induced generalized Euclidean metric.\end{prop}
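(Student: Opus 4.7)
The plan is to exploit the bijective correspondence, recalled in the Preliminaries, between generalized Euclidean metrics $G$ on $\mathbf{W}$ and pairs $(\gamma,\psi)$, where $\gamma$ is a positive definite metric on $W$ and $\psi\in\wedge^2W^*$. This reduces the proposition to a statement about classical tensor fields on $W$, where the restriction operation is entirely standard.

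Concretely, given the inclusion $\iota_*:V\hookrightarrow W$, I would define $\tilde{\gamma}=\iota^*\gamma$ and $\tilde{\psi}=\iota^*\psi$, i.e., the restrictions of $\gamma$ and $\psi$ to $V\times V$. Since the restriction of a positive definite symmetric form to a subspace is again positive definite, $\tilde{\gamma}$ is a Euclidean metric on $V$; and $\tilde{\psi}\in\wedge^2V^*$ is a $2$-form on $V$. By the same correspondence applied to $V$, the pair $(\tilde{\gamma},\tilde{\psi})$ determines a generalized Euclidean metric $\tilde{G}$ on $\mathbf{V}=V\oplus V^*$, which is what we want.

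To make the construction transparent at the level of the product structure, I would verify that the $\pm 1$-eigenspaces of the induced $\tilde{H}\in End(\mathbf{V})$ are the graphs
\[
\tilde{W}_\pm=\{(X,\flat_{\tilde{\psi}\pm\tilde{\gamma}}X)\,/\,X\in V\}\subseteq\mathbf{V},
\]
which are, by construction, the pullbacks of $W_\pm$ under the natural maps $V\to W$ on the first factor and $W^*\to V^*=W^*/ann\,V$ on the second (so that the image of $\flat_{\psi\pm\gamma}X$ in $V^*$ equals $\flat_{\tilde{\psi}\pm\tilde{\gamma}}X$). This identifies $\tilde{G}$ with the metric obtained from $G$ through the identification $\mathbf{V}\approx B/ann\,V$ described in Section 3, thereby justifying the word ``naturally.''

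I do not expect a serious obstacle: the only points that require a word of comment are (i) the positive definiteness of $\tilde{\gamma}$, which is immediate, and (ii) the compatibility of the pullback of $\psi$ with the quotient description $V^*\approx W^*/ann\,V$, which is a direct application of the definition of the transposed map $\iota^*$. Thus the whole argument amounts to the observation that the parametrization $(\gamma,\psi)$ of generalized Euclidean metrics behaves functorially under linear inclusions.
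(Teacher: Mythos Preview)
Your proposal is correct and follows essentially the same approach as the paper: both use the bijection $G\leftrightarrow(\gamma,\psi)$ and the fact that the restriction of a positive definite form is positive definite. The only cosmetic difference is the order of presentation: the paper first computes the pullbacks $\overleftarrow{\iota}^*W_\pm=\{(X,\flat_{\iota^*\psi\pm\iota^*\gamma}X)\,/\,X\in V\}$, checks directly that their intersection is $\ker\iota_*=0$ (this is where positivity of $\gamma$ enters), and then reads off the pair $(\iota^*\gamma,\iota^*\psi)$; you run the same identification in the reverse direction.
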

\begin{proof} Let $(\gamma,\psi)$ be the corresponding pair of $G$. For any linear mapping $f_*:V\rightarrow W$, we get
$$\overleftarrow{f}^*W_\pm=\{(X,f^*\flat_{\psi\pm\gamma}f_*X)\,/\,X\in V\},$$
hence, $dim\,\overleftarrow{f}^*W_\pm=dim\,V$. Then,
$$\overleftarrow{f}^*W_+\cap \overleftarrow{f}^*W_-= \{(X,f^*\flat_{\psi\pm\gamma}f_*X)\,/\,\gamma(f_*X,f_*X)=0\}$$
and, since $\gamma$ is positive definite, we get
\begin{equation}\label{Vpmcap}\overleftarrow{f}^*W_+\cap \overleftarrow{f}^*W_-=ker\,f_*.\end{equation} Thus, if $f_*$ is an injection, we get $\mathbf{V}=V_+\oplus V_-$, where $V_+=\overleftarrow{f}^*W_+,V_-=\overleftarrow{f}^*W_-$, and the decomposition defines the induced metric. We will denote it by $\overleftarrow{f}^*G$ and it corresponds to the pair $(f^*\gamma,f^*\psi)$.
 Q.e.d. \end{proof}

Now we define and characterize the induced structures.
\begin{defin}\label{defHsubsp} {\rm Assume that (\ref{descptKgen}) defines a generalized Hermitian structure on $W$. The subspace defined by the injection $\iota_*:V\subseteq W$ is a {\it generalized Hermitian subspace} if the spaces $\overleftarrow{\iota}^*L_\pm$ define a generalized Hermitian structure on $V$. The latter will be called the {\it induced structure}.}\end{defin}
\begin{prop}\label{propbicomplex} The subspace $V$ of the generalized Hermitian space $(W,G,\mathcal{J})$, with the equivalent quadruple $(\gamma,\psi,J_\pm)$, is a generalized Hermitian subspace iff $J_\pm(V)=V$.\end{prop}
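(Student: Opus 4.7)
The plan is to express $\overleftarrow{\iota}^*L_\pm$ in terms of the classical data $(\gamma,\psi,J_\pm)$ and to verify both implications by matching formulas and dimensions. Since the isomorphism $\tau_\pm$ carries $J_\pm$ to $\mathcal{J}_\pm$, the $+i$-eigenspaces read
$$L_\pm=\{(X,\flat_{\psi\pm\gamma}X)\,/\,X\in W^c,\,J_\pm X=iX\}.$$
Applying (\ref{pullbackgeneral}) together with the elementary identity $\iota^*\circ\flat_{\psi\pm\gamma}\circ\iota_*=\flat_{\iota^*(\psi\pm\gamma)}$ yields
$$\overleftarrow{\iota}^*L_\pm=\{(Y,\flat_{\iota^*(\psi\pm\gamma)}Y)\,/\,Y\in V^c,\,J_\pm\iota_*Y=i\iota_*Y\}.$$
Writing $Y=a+ib$ with $a,b\in V$, the condition $J_\pm Y=iY$ forces $b=-J_\pm a$ and $J_\pm a\in V$. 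So, setting $V_\pm:=V\cap J_\pm^{-1}(V)$ (the largest $J_\pm$-invariant subspace of $V$), the projection of $\overleftarrow{\iota}^*L_\pm$ to $V^c$ is $\{a-iJ_\pm a\,/\,a\in V_\pm\}$; as this is a graph, $\overleftarrow{\iota}^*L_\pm$ has complex dimension $\frac{1}{2}dim\,V_\pm$.

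For the ``if'' direction, assume $J_\pm(V)=V$, whence $V_\pm=V$ and $J_\pm|_V$ are $\iota^*\gamma$-compatible complex structures on $V$. The quadruple $(\iota^*\gamma,\iota^*\psi,J_\pm|_V)$ then defines a generalized Hermitian structure on $V$ whose $L_\pm$-spaces are
$$\tilde{L}_\pm=\{(Y,\flat_{\iota^*(\psi\pm\gamma)}Y)\,/\,Y\in V^c,\,(J_\pm|_V)Y=iY\}.$$
Under the invariance hypothesis $J_\pm\iota_*Y=\iota_*(J_\pm|_V)Y$, so the formula above gives $\tilde{L}_\pm=\overleftarrow{\iota}^*L_\pm$, i.e., the pullbacks define precisely this generalized Hermitian structure on $V$, with induced quadruple $(\iota^*\gamma,\iota^*\psi,J_\pm|_V)$.

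For the ``only if'' direction, assume $V$ is a generalized Hermitian subspace. By Definition \ref{defHsubsp} the spaces $\overleftarrow{\iota}^*L_\pm$ are half-dimensional isotropic summands in the four-term decomposition of $\mathbf{V}^c$, so each has complex dimension $\frac{1}{2}dim\,V$. Compared with the formula $\frac{1}{2}dim\,V_\pm$ this forces $V_\pm=V$, i.e., $J_\pm(V)\subseteq V$, and equality follows from the invertibility of $J_\pm$. The main technical point is thus the identification of $\overleftarrow{\iota}^*L_\pm$ as the graph of $\flat_{\iota^*(\psi\pm\gamma)}$ over $V^c\cap\iota_*^{-1}(ker(J_\pm-i\,Id))$ and the ensuing dimension count; once this is in hand, the remaining Dirac and metric compatibilities required for a generalized Hermitian structure on $V$ are automatically supplied by Proposition \ref{Gindus}, since the induced generalized Euclidean metric $\overleftarrow{\iota}^*G$ corresponds to the pair $(\iota^*\gamma,\iota^*\psi)$.
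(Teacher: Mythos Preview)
Your proof is correct and follows essentially the same route as the paper's: both compute $\overleftarrow{\iota}^*L_\pm$ explicitly as the graph of $\flat_{\iota^*(\psi\pm\gamma)}$ over $\{Y\in V^c:\,J_\pm\iota_*Y=i\iota_*Y\}$ and then reduce the question to the dimension condition $\dim_\mathds{C}\overleftarrow{\iota}^*L_\pm=\frac{1}{2}\dim V$, which holds iff $J_\pm(V)=V$. The only cosmetic difference is that for the ``if'' direction you invoke the quadruple correspondence to produce the induced Hermitian structure directly, whereas the paper instead checks conditions (i), (ii) of the decomposition (\ref{descptKgen}) by hand; both arguments are equally short.
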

\begin{proof} For any linear mapping $f_*:V\rightarrow W$ one has
\begin{equation}\label{f*auxbK} \begin{array}{l}
\overleftarrow{f}^*L_\pm=\{(X,f^*\flat_{\psi\pm\gamma}f_*X)\,/\,X\in V^c,\,J_\pm f_*X=if_*X\}\vspace*{2mm}\\
=\{(X,\flat_{f^*\psi\pm f^*\gamma}X)\,/\,X\in V^c,\,\tau_\pm(f_*X)\in L_\pm\}\subseteq\overleftarrow{f}^*W_\pm.\end{array}\end{equation}
The definition (\ref{pullbackgeneral}) of a pullback  shows that $ker\,f_*\subseteq\overleftarrow{f}^*U$ holds for any mapping $f_*:V\rightarrow W$ and any subset $U\subseteq\mathbf{W}=W\oplus W^*$.
Accordingly, from (\ref{f*auxbK}), we get
\begin{equation}\label{capbackLpm}	 \overleftarrow{f}^*L_+\cap\overleftarrow{f}^*\bar{L}_+= \overleftarrow{f}^*L_-\cap\overleftarrow{f}^*\bar{L}_-=(ker\,f_*)^c.
\end{equation}
In particular, if $f_*$ is injective, (\ref{f*auxbK}), (\ref{capbackLpm}) yield
\begin{equation}\label{auxJpminv}
\overleftarrow{f}^*L_\pm=\tau_\pm(V^c\cap T_\pm),\,\overleftarrow{f}^*L_\pm\cap \overleftarrow{f}^*\bar{L}_\pm=0,\end{equation}
where $T_\pm$ is the $i$-eigenspace of the complex structure $J_\pm$ of $W_\pm$.

Simple calculations that use (\ref{f*auxbK}) show that $\overleftarrow{f}^*L_+\oplus\overleftarrow{f}^*L_-,
\overleftarrow{f}^*L_+\oplus\overleftarrow{f}^*\bar{L}_-$ are $g$-isotropic subspaces of $\mathbf{V}^c$ and
$\overleftarrow{f}^*L_+\oplus\overleftarrow{f}^*\bar{L}_+,
\overleftarrow{f}^*L_-\oplus\overleftarrow{f}^*\bar{L}_-$ are $g$-positive and $g$-negative, respectively. In view of these facts, if $f_*$ is injective, the subspaces $\overleftarrow{f}^*L_\pm\subseteq\mathbf{V}^c$ satisfy conditions (i), (ii) that characterize the decomposition (\ref{descptKgen}) of a generalized Hermitian space iff $dim\,\overleftarrow{f}^*L_\pm=dim\,V/2$. Equalities (\ref{auxJpminv}) show that the previous dimension condition holds iff $V^c\cap T_\pm$ are $i$-eigenspaces of complex structures of $V$, which holds iff $J_\pm(V)=V$. Q.e.d. \end{proof}

A subspace $V$ of a space $W$ endowed with two complex structures $J_\pm$ such that $J_\pm(V)=V$ will be called a {\it bicomplex subspace}.
\begin{prop}\label{bi=cJJ'} The bicomplex subspace $V$ of the linear, generalized Hermitian space $(W,G,\mathcal{J},\mathcal{J}')$ is a generalized complex subspace of both $(W,\mathcal{J})$ and $(W,\mathcal{J}')$. Moreover, the pair of induced structures $\tilde{\mathcal{J}},\tilde{\mathcal{J}}'$ coincides with the pair of the induced generalized Hermitian structure $\overleftarrow{\iota}^*L_\pm$ ($\iota_*:V\subseteq W$).\end{prop}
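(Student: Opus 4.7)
The plan is to exploit Proposition~\ref{propbicomplex}: since $V$ is bicomplex, the pullbacks $\overleftarrow{\iota}^*L_\pm$ already form an induced generalized Hermitian structure on $V$. Extracting from the decomposition (\ref{descptKgen}) applied to this induced structure, one obtains two commuting generalized complex structures $\tilde{\mathcal{J}},\tilde{\mathcal{J}}'$ on $V$ with $i$-eigenspaces $\overleftarrow{\iota}^*L_+\oplus\overleftarrow{\iota}^*L_-$ and $\overleftarrow{\iota}^*L_+\oplus\overleftarrow{\iota}^*\bar{L}_-$ respectively. What remains is to identify these with the structures induced on $V$ in the sense of Definition~\ref{defsbvarc} by $\mathcal{J}$ (whose $i$-eigenspace on $W$ is $L=L_+\oplus L_-$) and by $\mathcal{J}'$ (whose $i$-eigenspace is $L'=L_+\oplus\bar{L}_-$).

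The crux is the identity
\[
\overleftarrow{\iota}^*(L_+\oplus L_-)=\overleftarrow{\iota}^*L_+\oplus\overleftarrow{\iota}^*L_-,
\]
and its analogue with $\bar{L}_-$ in place of $L_-$. The inclusion $\supseteq$ is immediate from (\ref{pullbackgeneral}): the sum of $(X_\pm,\iota^*\mu_\pm)\in\overleftarrow{\iota}^*L_\pm$ lifts to $(\iota_*X_++\iota_*X_-,\mu_++\mu_-)\in L_+\oplus L_-$. For the reverse inclusion I would argue by $g$-isotropy and maximality rather than by an explicit splitting, because a decomposition $\iota_*X=Y_++Y_-$ with $Y_\pm\in T_\pm\subseteq W^c$ forced by membership in $L_+\oplus L_-$ need not respect $\iota_*V^c$. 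The pullback preserves isotropy since
\[
g_V((X_1,\iota^*\eta_1),(X_2,\iota^*\eta_2))=g_W((\iota_*X_1,\eta_1),(\iota_*X_2,\eta_2)),
\]
so $\overleftarrow{\iota}^*(L_+\oplus L_-)$ is an isotropic subspace of $\mathbf{V}^c$ containing $\overleftarrow{\iota}^*L_+\oplus\overleftarrow{\iota}^*L_-$, which by Proposition~\ref{propbicomplex} is a Dirac (hence maximally isotropic) subspace of $\mathbf{V}^c$; equality follows.

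Once this identity is established, the rest is immediate. By Definition~\ref{defsbvarc}, the induced structure from $\mathcal{J}$ exists iff $\overleftarrow{\iota}^*L\cap\overleftarrow{\iota}^*\bar{L}=0$; but via the identity above this intersection equals $(\overleftarrow{\iota}^*L_+\oplus\overleftarrow{\iota}^*L_-)\cap(\overleftarrow{\iota}^*\bar{L}_+\oplus\overleftarrow{\iota}^*\bar{L}_-)$, which vanishes because these are the $\pm i$-eigenspaces of $\tilde{\mathcal{J}}$. Hence $V$ is a generalized complex subspace of $(W,\mathcal{J})$, and the $i$-eigenspace of its induced structure coincides with that of $\tilde{\mathcal{J}}$. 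Running the same argument with $\bar{L}_-$ replacing $L_-$ handles $(W,\mathcal{J}')$ and identifies its induced structure with $\tilde{\mathcal{J}}'$, proving the ``moreover'' clause. I expect the only delicate point to be the isotropy/maximality argument above; an attempt at a direct splitting is obstructed precisely by the fact that the $T_\pm$-decomposition in $W^c$ need not restrict to $V^c$, even when $V$ is $J_\pm$-invariant.
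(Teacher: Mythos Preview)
Your argument is correct, but it follows a genuinely different route from the paper's. The paper establishes that $V$ is a generalized complex subspace of $(W,\mathcal{J})$ and $(W,\mathcal{J}')$ by verifying the algebraic criteria (\ref{gcsusp}) directly: using the expressions (\ref{clasicK}) for $\pi,\pi',A,A'$ in terms of $J_\pm,\gamma,\psi$, one computes $\sharp_\pi(ann\,V)=\frac12(J_+-J_-)(V^{\perp_\gamma})\subseteq V^{\perp_\gamma}$ (by $J_\pm$-invariance of $V$ and $\gamma$-compatibility), giving $V\cap\sharp_\pi(ann\,V)=0$, and similarly decomposes $AX$ to check $AV\subseteq V+\operatorname{im}\sharp_\pi$. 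Only afterwards does the paper turn to the ``moreover'' clause, arguing (essentially as you do) that $\overleftarrow{\iota}^*L_+\oplus\overleftarrow{\iota}^*L_-\subseteq\overleftarrow{\iota}^*(L_+\oplus L_-)$ together with equality of dimensions forces coincidence of the two induced structures.

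You instead bypass the tensor computation entirely: Proposition~\ref{propbicomplex} gives the induced Hermitian structure, and then your isotropy/maximality argument yields the identity $\overleftarrow{\iota}^*(L_+\oplus L_-)=\overleftarrow{\iota}^*L_+\oplus\overleftarrow{\iota}^*L_-$, from which both the existence of the $\mathcal{J}$-induced structure and its identification with $\tilde{\mathcal{J}}$ follow at once. This is cleaner and more conceptual; the paper's approach, on the other hand, makes the role of conditions (\ref{gcsusp}) explicit and yields as a by-product the formulas that reappear in Proposition~\ref{sbcuJpm}. Note, incidentally, that your isotropy step could be replaced by the standing fact (recalled after (\ref{pullbackgeneral})) that the pullback of a Dirac subspace is Dirac: then both sides of your identity are maximally isotropic with one contained in the other, which is exactly the dimension argument the paper uses.
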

\begin{proof} We have to check the existence conditions of the induced structures:
\begin{equation}\label{auxcomparare} \begin{array}{l} V\cap\sharp_\pi(ann\,V)=0,\;AV\subseteq V+\sharp_\pi(W^*),\vspace*{2mm}\\ V\cap\sharp_{\pi'}(ann\,V)=0,\;A'V\subseteq V+\sharp_\pi'(W^*). \end{array}\end{equation}

Take $\lambda\in ann\,V$, equivalently, $\lambda=\flat_\gamma Z$, where $Z\in V^{\perp_\gamma}$, and assume that either $X=\sharp_\pi\lambda\in V$ or $X=\sharp_{\pi'}\lambda\in V$. Then, (\ref{clasicK}) gives $X=(1/2)(J_+\mp J_-)Z$, respectively, where, since $J_\pm(V)=V$, the right hand side belongs to $V^{\perp_\gamma}$ because of the $\gamma$-compatibility of $J_\pm$. Hence, $X=0$ and the first part of the two lines of (\ref{auxcomparare}) holds.

Furthermore, for $X\in V$, (\ref{clasicK}) gives
$$\begin{array}{l}
2AX=(J_++J_-)X+(J_+-J_-)\varphi X\vspace*{2mm}\\ =
(J_++J_-)X+(J_+-J_-)(pr_{V}\varphi X) + (J_+-J_-)(pr_{V^{\perp_\gamma}}\varphi X)\vspace*{2mm}\\ =(J_++J_-)X+(J_+-J_-)(pr_{V}\varphi X) +\frac{1}{2} \sharp_\pi(\flat_\gamma(pr_{V^{\perp_\gamma}}\varphi X)).\end{array}$$
In the case of a bicomplex subspace the first two terms belong to $V$ and we see that the second condition of the first line of (\ref{auxcomparare}) holds. The second condition in the second line of (\ref{auxcomparare}) is justified similarly.

Now, for a moment, we denote by $\dot{\mathcal{J}},\dot{\mathcal{J}}'$ the structures induced by $\mathcal{J},\mathcal{J}'$. These have the $i$-eigenspaces $\dot{L}=\overleftarrow{\iota}^*(L_+\oplus L_-),\dot{L}'=\overleftarrow{\iota}^*(L_+\oplus \bar{L}_-)$. On the other hand, the generalized complex structures included in the induced generalized Hermitian structure have the $i$-eigenspaces $\overleftarrow{\iota}^*L_+\oplus\overleftarrow{\iota}^*L_-,\linebreak
\overleftarrow{\iota}^*L_+\oplus\overleftarrow{\iota}^*\bar{L}_-$. The latter are included in $\dot{L},\dot{L}'$ and have the same dimension. Hence, $\dot{\mathcal{J}}=\tilde{\mathcal{J}},\dot{\mathcal{J}}' =\tilde{\mathcal{J}}'$.
 Q.e.d. \end{proof}
\begin{prop}\label{HcuB} $V$ is a generalized Hermitian subspace of $W$ iff $V$ is a generalized complex subspace of both $(W,\mathcal{J})$, $(W,\mathcal{J}')$ and\footnote{Theorem 8.1 of \cite{BS} asserts that the condition that follows is ensured by the first condition, but, it seems the proof has an error that we could not overcome.} $s(B\cap(\mathcal{J}B)\cap(\mathcal{J}'B)\cap(HB))=\mathbf{V}$.\end{prop}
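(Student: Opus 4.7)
The plan is to work after complexification. The operators $\mathcal{J}$, $\mathcal{J}'$ and $H=-\mathcal{J}\circ\mathcal{J}'$ pairwise commute on $\mathbf{W}^c$ and share the common eigenspace decomposition $\mathbf{W}^c=L_+\oplus L_-\oplus\bar{L}_+\oplus\bar{L}_-$, on whose four summands the triples of eigenvalues of $(\mathcal{J},\mathcal{J}',H)$ are pairwise distinct. Writing $S:=B\cap\mathcal{J}B\cap\mathcal{J}'B\cap HB$, one checks that $S^c$ is the largest subspace of $B^c$ jointly invariant under these three operators; by joint diagonalizability it then decomposes as
\[
S^c=(B^c\cap L_+)\oplus(B^c\cap L_-)\oplus(B^c\cap\bar{L}_+)\oplus(B^c\cap\bar{L}_-).
\]

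Next I would compute each summand using $L_\pm=\{(Z,\flat_{\psi\pm\gamma}Z):Z\in W^c,\,J_\pm Z=iZ\}$, obtaining $B^c\cap L_\pm=\{(X,\flat_{\psi\pm\gamma}X):X\in V^c,\,J_\pm X=iX\}$. The constraint $J_\pm X=iX$ with $X\in V^c$ forces $X\in V^c\cap J_\pm V^c=(V^{(\pm)})^c$, where $V^{(\pm)}:=V\cap J_\pm V$ carries a complex structure inherited from $J_\pm$, so this intersection has complex dimension $(\dim V^{(\pm)})/2$, and analogously for the conjugate summands. Moreover $s$ is injective on each summand (its kernel meets each $L_*$ only at the origin), and by formula~(\ref{f*auxbK}) it sends $B^c\cap L_*$ onto $\overleftarrow{\iota}{}^*L_*$.

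For the forward implication, Proposition~\ref{propbicomplex} gives $V^{(\pm)}=V$, Proposition~\ref{bi=cJJ'} ensures that $V$ is a generalized complex subspace of both $(W,\mathcal{J})$ and $(W,\mathcal{J}')$, and Definition~\ref{defHsubsp} supplies the direct sum $\mathbf{V}^c=\overleftarrow{\iota}{}^*L_+\oplus\overleftarrow{\iota}{}^*L_-\oplus\overleftarrow{\iota}{}^*\bar{L}_+\oplus\overleftarrow{\iota}{}^*\bar{L}_-$; combining this with the identification of $S^c$ above yields $s(S^c)=\mathbf{V}^c$, i.e., $s(S)=\mathbf{V}$.

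For the reverse implication only the $s$-condition is required, by a dimension count:
\[
2\dim V=\dim_{\mathds{C}}\mathbf{V}^c=\dim_{\mathds{C}} s(S^c)\le\sum_{*}\dim_{\mathds{C}}(B^c\cap *)=\dim V^{(+)}+\dim V^{(-)},
\]
while $V^{(\pm)}\subseteq V$ forces $V^{(+)}=V=V^{(-)}$, so $V$ is bicomplex, and Proposition~\ref{propbicomplex} identifies $V$ as a generalized Hermitian subspace. The main step is the joint eigenspace identification of $S^c$, which hinges on $H=-\mathcal{J}\circ\mathcal{J}'$ making the algebra generated by $\mathcal{J},\mathcal{J}',H$ simultaneously diagonalizable; everything else reduces to a clean dimension count. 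I note that the two generalized complex subspace hypotheses do not enter the reverse direction, which is consistent with the footnote concerning the uncertain status of the corresponding implication in \cite{BS}.
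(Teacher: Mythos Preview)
Your argument is correct and shares its core with the paper's proof: both establish the decomposition
\[
\mathcal{B}^c=(B^c\cap L_+)\oplus(B^c\cap L_-)\oplus(B^c\cap\bar L_+)\oplus(B^c\cap\bar L_-)
\]
(the paper obtains it via the explicit projector $\tfrac14(Id+H)(Id-i\mathcal{J})$, you via the ``largest jointly invariant subspace'' formulation; these are the same fact) and then use $\overleftarrow{\iota}^*L_*=s(B\cap L_*)$.

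The one genuine difference is in the reverse implication. The paper invokes both generalized complex subspace hypotheses to ensure that the images $s(B^c\cap L_*)=\overleftarrow{\iota}^*L_*$ form a \emph{direct} sum, and then reads off the Hermitian decomposition of $\mathbf{V}^c$. You instead bypass directness entirely with the inequality
\[
2\dim V=\dim_{\mathds{C}} s(S^c)\le\sum_*\dim_{\mathds{C}}(B^c\cap L_*)=\dim V^{(+)}+\dim V^{(-)}\le 2\dim V,
\]
forcing $V^{(+)}=V^{(-)}=V$, hence $V$ bicomplex, and then Proposition~\ref{propbicomplex} finishes. This is cleaner and, as you observe, shows that the $s$-condition alone already forces $V$ to be generalized Hermitian (and therefore, by Proposition~\ref{bi=cJJ'}, also a generalized complex subspace for both $\mathcal{J}$ and $\mathcal{J}'$); in particular it confirms the suspicion in the footnote that the two generalized complex hypotheses are redundant in this direction.
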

\begin{proof} Recall that $B=V\oplus W^*\subseteq\mathbf{W}$ and $s:B\rightarrow\mathbf{V}$ is the projection onto the quotient of $B$ by $ann\,V$.
Denote $\mathcal{B}=B\cap(\mathcal{J}B)\cap(\mathcal{J}'B)\cap(HB)$. The last condition of the proposition is equivalent to $ B=\mathcal{B}+ann\,V$ and
$\mathcal{X}\in\mathcal{B}$ iff $\mathcal{X},\mathcal{J}\mathcal{X},\mathcal{J}'\mathcal{X}, H\mathcal{X}\in B$. Accordingly, and since $L_\pm=W_\pm\cap L$ are intersections of eigenspaces, we get $$B\cap L_+\subseteq\mathcal{B},B\cap L_-\subseteq\mathcal{B},B\cap \bar{L}_+\subseteq\mathcal{B},B\cap \bar{L}_-\subseteq\mathcal{B}$$ and also, $\forall\mathcal{X}\in\mathcal{B}$,  $$pr_{L_+}\mathcal{X}=\frac{1}{4}(Id+H)(Id-i\mathcal{J})\mathcal{X} =\frac{1}{4}(\mathcal{X}-i\mathcal{J}\mathcal{X}-i\mathcal{J}'\mathcal{X} +H\mathcal{X}) \in B\cap L_+.$$ Similar results hold for the projections of $\mathcal{X}$ on $L_-,\bar{L}_+,\bar{L}_-$. Using these facts, we see that
\begin{equation}\label{Bcucap}\mathcal{B}=(B\cap L_+)\oplus(B\cap L_-)\oplus(B\cap \bar{L}_+)\oplus(B\cap \bar{L}_-).\end{equation}

For any subset $U\subseteq\mathbf{W}$ we may identify $\overleftarrow{\iota}^*U=s(B\cap U)$. Accordingly, if we apply $s$ to (\ref{Bcucap}) and use
$\overleftarrow{\iota}^*L\cap\overleftarrow{\iota}^*\bar{L}=0, \overleftarrow{\iota}^*L'\cap\overleftarrow{\iota}^*\bar{L}'=0$, which follow from the fact that $W$ is a generalized complex subspace for both $\mathcal{J},\mathcal{J}'$, we get
\begin{equation}\label{Bronddupacap} s(\mathcal{B})=\overleftarrow{\iota}^*L_+ \oplus\overleftarrow{\iota}^*L_-\oplus\overleftarrow{\iota}^*\bar{L}_+ \oplus\overleftarrow{\iota}^*\bar{L}_-.\end{equation}
This result implies the conclusion of the proposition.	Q.e.d. \end{proof}
\begin{rem}\label{obsGindus} {\rm By a similar procedure, one can prove that, if $H$ is the paracomplex structure that corresponds to an arbitrary generalized Euclidean metric $G$, then, $B=B\cap(HB)+ann\,V$.}\end{rem}
\begin{example}\label{exsHg} {\rm For any space $(W,\gamma,J_\pm)$, put $S^c_+=(T_+\cap T_-)\oplus(\bar{T}_+\cap \bar{T}_-)$, $S^c_-=(T_+\cap \bar{T}_-)\oplus(\bar{T}_+\cap T_-)$, $U=(S_+\oplus S_-)^{\perp_\gamma}$ (obviously, $S_+\cap S_-=0$) \cite{OP}. By decomposing vectors as sums of eigenvectors of $J_\pm$, it follows that $S_\pm$ are bicomplex subspaces and the $\gamma$-compatibility of $J_\pm$ implies that the same holds for $U$.

Let $W$ be a generalized Hermitian space such that $J_\pm$ commute \cite{AG}. Then, if $S$ is a $J_+$-invariant subspace of $W$ the subspace $V=S\cap(J_-S)$ is bicomplex.

If $V\subseteq(W,G,\mathcal{J},\mathcal{J}')$ is $\mathcal{J}$-totally invariant, $\mathcal{J}B=B$ and $V$ has the induced structure (Example \ref{exindcstr}). Moreover, $V$ is a bicomplex subspace \cite{Goto}. Indeed, we have, $\mathcal{J}'B=H(\mathcal{J}B)=HB$ and using Remark \ref{obsGindus} we get $B=B\cap(HB)+ann\,V=B\cap(\mathcal{J}'B)+ann\,V$, which shows that $V$ is also a $\mathcal{J}'$-generalized complex subspace. The last condition of Proposition \ref{HcuB} holds too, because of Remark \ref{obsGindus} and since $B\cap(\mathcal{J}B)\cap(\mathcal{J}'B)\cap(HB)=B\cap(HB)$.

Finally, it is easy to check that the $B$-field transformation of the generalized Hermitian structure defined by the quadruple $(\gamma,\psi,J_\pm)$ yields the generalized Hermitian structure defined by the quadruple $(\gamma,\psi-B,J_\pm)$. Since $J_\pm$ remain unchanged, we see that a bicomplex subspace remains bicomplex after a $B$-field transformation.}\end{example}
\begin{prop}\label{sbcuJpm} A subspace $V$ of a generalized Hermitian space $W$ is generalized complex with respect to the two structures $\mathcal{J},\mathcal{J}'$ of $W$ iff it satisfies the conditions
\begin{equation}\label{sbJpm1}V\cap[(J_+-J_-)(V^{\perp_\gamma})]=0,\, V\cap[(J_++J_-)(V^{\perp_\gamma})]=0\end{equation}
and one of each pair of conditions
\begin{equation}\label{sbJpm2} J_\pm(V)\subseteq V+(J_+-J_-)(W), J_\pm(V)\subseteq V+(J_++J_-)(W).\end{equation}
\end{prop}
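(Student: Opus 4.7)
The plan is to apply the characterization (\ref{gcsusp}) of generalized complex subspaces separately to $\mathcal{J}$ and to $\mathcal{J}'$, then translate the four resulting conditions into the language of the quadruple $(\gamma,\psi,J_\pm)$ via the classical tensor formulas (\ref{clasicK}). Throughout I use that $\gamma$ is non-degenerate, so $\flat_\gamma$ and $\sharp_\gamma$ are mutually inverse isomorphisms; in particular $ann\,V=\flat_\gamma(V^{\perp_\gamma})$ and $\sharp_\gamma(W^*)=W$.

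For the two ``first conditions'' of (\ref{gcsusp}), I would substitute $\sharp_\pi=\tfrac{1}{2}(J_+-J_-)\circ\sharp_\gamma$ and $\sharp_{\pi'}=\tfrac{1}{2}(J_++J_-)\circ\sharp_\gamma$ to obtain
\begin{equation*}\sharp_\pi(ann\,V)=\tfrac{1}{2}(J_+-J_-)(V^{\perp_\gamma}),\qquad\sharp_{\pi'}(ann\,V)=\tfrac{1}{2}(J_++J_-)(V^{\perp_\gamma}).\end{equation*}
The requirements $V\cap\sharp_\pi(ann\,V)=0$ and $V\cap\sharp_{\pi'}(ann\,V)=0$ then reduce immediately to the two equalities of (\ref{sbJpm1}).

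For the ``second conditions'' $A(V)\subseteq V+im\,\sharp_\pi$ and $A'(V)\subseteq V+im\,\sharp_{\pi'}$, I would first record that $im\,\sharp_\pi=(J_+-J_-)(W)$ and $im\,\sharp_{\pi'}=(J_++J_-)(W)$, and then rewrite the expressions for $A,A'$ in (\ref{clasicK}) in the form
\begin{equation*}2A=(J_++J_-)+(J_+-J_-)\varphi,\qquad 2A'=(J_+-J_-)+(J_++J_-)\varphi.\end{equation*}
Consequently, modulo $(J_+-J_-)(W)$ the operator $A$ coincides with both $J_+$ and $J_-$, while modulo $(J_++J_-)(W)$ the operator $A'$ coincides with $J_+$ and with $-J_-$. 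It follows that $A(V)\subseteq V+(J_+-J_-)(W)$ is equivalent to either of the conditions $J_\pm(V)\subseteq V+(J_+-J_-)(W)$ (and the two are equivalent to each other), and similarly for $A'$ and the second pair of (\ref{sbJpm2}); this is the ``one of each pair'' clause in the statement.

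The only real obstacle is bookkeeping: one must check carefully that the $\sharp_\gamma$-identification faithfully converts conditions phrased in $W^*$ (annihilators, images of $\sharp_\pi$) into conditions phrased in $W$ (the $\gamma$-orthogonal complement, images of $J_+\mp J_-$), and then observe that the four conditions so obtained are exactly (\ref{sbJpm1}) together with one representative from each pair of (\ref{sbJpm2}).
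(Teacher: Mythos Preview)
Your proposal is correct and follows essentially the same route as the paper: translate the two conditions of (\ref{gcsusp}) for each of $\mathcal{J},\mathcal{J}'$ via the classical tensor formulas (\ref{clasicK}), using $ann\,V=\flat_\gamma(V^{\perp_\gamma})$ to obtain (\ref{sbJpm1}) and then reducing the $A,A'$ conditions modulo $im\,\sharp_\pi,im\,\sharp_{\pi'}$ to (\ref{sbJpm2}). The only difference is cosmetic: for the second step the paper invokes (\ref{JApi}), i.e.\ $J_\pm=A+\sharp_\pi\circ\flat_{\psi\pm\gamma}$, to see directly that $A$ and $J_\pm$ agree modulo $im\,\sharp_\pi$, whereas you expand $2A=(J_++J_-)+(J_+-J_-)\varphi$ from (\ref{clasicK}) to reach the same conclusion; both are one-line computations.
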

\begin{proof} The expression (\ref{clasicK}) of $\pi,\pi'$ yields $$\sharp_\pi(ann\,V)=(J_+-J_-)(V^{\perp_\gamma}),\, \sharp_{\pi'}(ann\,V)=(J_++J_-)(V^{\perp_\gamma}).$$ Accordingly, the first conditions in the two lines of (\ref{auxcomparare}) translate into (\ref{sbJpm1}) and their meaning is that $V$ is a Poisson-Dirac subspace for the bivectors $\pi,\pi'$. If this happens, formula (\ref{JApi}) allows us to check the equivalence of the second and fourth condition (\ref{auxcomparare}) with each pair in (\ref{sbJpm2}).
 Q.e.d. \end{proof}
\begin{rem}\label{obsJpmj} {\rm Formulas (\ref{clasicK}) shows that $\pi,\pi'$ are non degenerate iff $J_+\pm J_-$ are non degenerate. Then, (\ref{sbJpm1}) holds iff $V$ is a symplectic subspace for both structures $\pi^{-1},\pi^{'-1}$. Moreover, in this case, all the conditions (\ref{sbJpm2}) necessarily hold and $V$ is a generalized subcomplex space of both $(W,\mathcal{J})$ and $(W,\mathcal{J}')$. Notice that
$$J_+-J_-=2i(pr_{T_+}-pr_{T_-}),\,J_++J_-=2i(pr_{T_+}-pr_{\bar{T}_-}),$$
where the projections are defined by the decompositions of $W^c$ as sums of eigenspaces of $J_\pm$, $W^c=T_+\oplus \bar{T}_+,\,W^c=T_-\oplus \bar{T}_-$.
Hence, $J_+\pm J_-$ have kernel zero and image $W$ (are isomorphisms) iff $T_+\cap T_-=0, T_+\cap \bar{T}_-=0$. Notice also that, if $\psi=0$, (\ref{clasicK}) imply
$A=(J_++J_-)/2,\,A'=(J_+-J_-)/2$.}\end{rem}

Now, let $f_*:(V,G,\mathcal{J})\rightarrow W$ be a surjection defined on a generalized Hermitian space with the generalized complex structure $\mathcal{J}$ and the metric $G$ equivalent to the paracomplex structure $H$. We will establish the conditions for the push-forward spaces $\overrightarrow{f}_*L_\pm$ to define a {\it projected} generalized Hermitian structure on $W$.
\begin{prop}\label{propprojHerm} The projected Hermitian structure exists iff $ker\,f_*$ is a bicomplex subspace of $(V,J_\pm)$.\end{prop}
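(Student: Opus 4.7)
The plan is to mirror the proof of Proposition \ref{propbicomplex}, dualized from the injective (pullback) to the surjective (push-forward) setting. Since $f_*$ is surjective, $f^*$ is injective with $\mathrm{im}\,f^*=\mathrm{ann}\,\ker f_*$, and combining (\ref{pushforward}) with the explicit description of $L_\pm$ as the image under $\tau_\pm$ of the $i$-eigenspace of $J^V_\pm$ in $V^c$ yields the dual of (\ref{f*auxbK}):
$$\overrightarrow{f}_*L_\pm=\{(f_*X,\eta):X\in V^c,\,J^V_\pm X=iX,\,f^*\eta=\flat_{\psi_V\pm\gamma_V}X\}.$$
This forces the constraint $\flat_{\psi_V\pm\gamma_V}X\in(\mathrm{ann}\,\ker f_*)^c$ on $X$, and non-degeneracy of $\psi_V\pm\gamma_V$ together with injectivity of $f^*$ give the analogue of (\ref{capbackLpm}), namely $\overrightarrow{f}_*L_\pm\cap\overrightarrow{f}_*\bar L_\pm=0$ automatically.

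As in the proof of Proposition \ref{propbicomplex}, the $g$-isotropy and $g$-positive/$g$-negative pattern of the various sums of push-forwards transfers from the properties of the original $L_\pm,\bar L_\pm$ in $\mathbf{V}^c$, so the existence of a projected generalized Hermitian structure on $W$ reduces to the single dimension equality $\dim_{\mathbb C}\overrightarrow{f}_*L_\pm=\dim_{\mathbb R}W/2$. For necessity, if the projected structure exists, the associated classical complex structures $\tilde J_\pm$ on $W$ must be $f_*$-related to $J^V_\pm$, and this projectability forces $J^V_\pm(\ker f_*)\subseteq\ker f_*$; invertibility of $J^V_\pm$ promotes the inclusion to the equality defining bicomplex. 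For sufficiency, $\gamma_V$-compatibility of $J^V_\pm$ makes $(\ker f_*)^{\perp_{\gamma_V}}$ also bicomplex, and I would transport the restricted quadruple $(\gamma_V,\psi_V,J^V_\pm)\big|_{(\ker f_*)^{\perp_{\gamma_V}}}$ through the isomorphism $f_*\big|_{(\ker f_*)^{\perp_{\gamma_V}}}$ to produce a candidate quadruple on $W$; the associated $L_\pm^W$ of that quadruple then coincides with $\overrightarrow{f}_*L_\pm$ by a direct comparison and dimension match.

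The hard part is the sufficiency dimension count: showing $\dim_{\mathbb C}\{X\in(V^c)_\pm:\flat_{\psi_V\pm\gamma_V}X\in(\mathrm{ann}\,\ker f_*)^c\}=\dim W/2$ under just the bicomplex hypothesis. Since $\psi_V$ need not interact nicely with $\ker f_*$ on its own, one must carefully combine the $J_\pm$-invariance of both $\ker f_*$ and $(\ker f_*)^{\perp_{\gamma_V}}$ with the symmetric/antisymmetric splitting of $\psi_V\pm\gamma_V$ to identify this constrained subspace with the $i$-eigenspace of $J_\pm\big|_{(\ker f_*)^{\perp_{\gamma_V}}}$, of complex dimension $(\dim V-\dim\ker f_*)/2=\dim W/2$. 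This parallels the dimension count at the end of the proof of Proposition \ref{propbicomplex}.
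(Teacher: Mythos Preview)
The paper's proof is a two-line duality argument that avoids everything you are struggling with: since pushforward by $f_*$ is exactly pullback by the injection $f^*:W^*\hookrightarrow(V^*,G^*,\mathcal{J}^*)$ (as noted after Definition~\ref{projstr}), the projected Hermitian structure on $W$ exists iff $W^*$ is a generalized Hermitian subspace of $V^*$; then Proposition~\ref{propbicomplex} applies verbatim to give $J^*_\pm(W^*)=W^*$, which is equivalent to $J_\pm(\ker f_*)=\ker f_*$ because $W^*=\mathrm{ann}\,\ker f_*$. No explicit dimension count is needed.

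Your direct approach contains a genuine error in the sufficiency step. The identification you aim for --- that the constrained space $\{X\in T_\pm:\flat_{\psi\pm\gamma}X\in\mathrm{ann}\,\ker f_*\}$ coincides with the $i$-eigenspace $T_\pm\cap\bigl((\ker f_*)^{\perp_\gamma}\bigr)^c$ --- is false in general. Take $V=\mathbb{R}^4$ with the standard $\gamma$, $J_+=J_-=J$ given by $Je_1=e_2,\,Je_3=e_4$, $\ker f_*=\mathrm{span}\{e_1,e_2\}$ (visibly $J$-invariant), and $\psi=e^1\wedge e^3+e^2\wedge e^4$. The constrained subspace is spanned by $(e_1+e_3)-i(e_2+e_4)$, whereas $T\cap Q^c$ is spanned by $e_3-ie_4$; they have the same dimension but are different lines. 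Correspondingly, the structure on $W$ produced by transporting the restricted quadruple $(\gamma|_Q,\psi|_Q,J_\pm|_Q)$ via $f_*|_Q$ does \emph{not} agree with $\overrightarrow{f}_*L_\pm$: in this example $\psi|_Q=0$, so the candidate $L_+^W$ is $\mathrm{span}\{(f_*e_3-if_*e_4,\,e^3-ie^4)\}$, while $\overrightarrow{f}_*L_+=\mathrm{span}\{(f_*e_3-if_*e_4,\,2e^3-2ie^4)\}$. The ``symmetric/antisymmetric splitting'' you invoke cannot repair this, because $\psi$ has no prescribed behaviour with respect to $J_\pm$. Your necessity argument is also shaky: asserting that the projected $\tilde J_\pm$ are $f_*$-related to $J^V_\pm$ presupposes $f_*(T_\pm)=\tilde T_\pm$, which already uses $\dim(T_\pm\cap(\ker f_*)^c)=\dim(\ker f_*)/2$, i.e., the bicomplex condition you are trying to deduce. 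The duality trick sidesteps all of this.
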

\begin{proof} Obviously, the projected structure exists iff the injection $f^*:W^*\subseteq(V^*,G^*,\mathcal{J}^*)$, where the star always denotes the dual object, makes $W^*$ into a generalized Hermitian subspace of $V^*$. By Proposition \ref{propbicomplex},
$W^*$ has an induced generalized Hermitian structure iff it is invariant by the complex structures $J^*_\pm$ and this condition is equivalent to $J_\pm(ker\,f_*)=ker\,f_*$. Q.e.d. \end{proof}

We give a few more details about the projected structure. Assume that the metric $G$ of $\mathbf{V}$ corresponds to the quadruple $(\gamma,\psi,J_\pm)$ and put $Q=(ker\,f_*)^{\perp_\gamma}$. We may identify $Q$ with $W$ by the isomorphism $f_*|_Q:Q\rightarrow W$ and $Q^*$ with $W^*=ann(ker\,f_*)$. In view of the $\gamma$-compatibility of $J_\pm$, the $J_\pm$-invariance of $ker\,f_*$ is equivalent to $J_\pm(Q)=Q$. Using again Proposition \ref{propbicomplex}, we get the induced generalized Hermitian structure $(\tilde{G},\tilde{\mathcal{J}})$ on $Q$ defined by $(\gamma|_Q,\psi|_Q,J_\pm|_Q)$. On the other hand, the projected structure of $W$ is the dual of the structure defined on $W^*$ by the quadruple $(\tilde{\gamma}^*,\tilde{\psi}^*,J^*_\pm)$ of objects induced by the inclusion $f^*$. The structures of $Q,W$ coincide under the natural identification $Q\approx W$.

Now, we shall refer to submanifolds $\iota:N\hookrightarrow(M,G,\mathcal{J},\mathcal{J}')$, where $M$ is a generalized almost Hermitian manifold with the structure $(G,\mathcal{J},\mathcal{J}')$.
\begin{defin}\label{defsubKgen} {\rm $N$ is a generalized almost Hermitian submanifold if it has the	 induced structure at every point $x\in N$ and the differentials $\iota_{*x}$ pull back the subbundles $L_\pm$ of $\mathbf{T}^cM$ to smooth subbundles of $\mathbf{T}^cN$.}\end{defin}
\begin{prop}\label{propsubK1} Let $\iota:N\hookrightarrow(M,G,\mathcal{J},\mathcal{J}')$ be a generalized almost Hermitian submanifold of a generalized K\"ahler manifold $M$. Then, $N$ is a generalized K\"ahler manifold too.\end{prop}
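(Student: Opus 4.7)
The plan is to reduce the assertion to the fact, recalled just before Proposition \ref{sbvardif}, that a submanifold with induced structure of an integrable generalized complex manifold inherits an integrable induced structure.

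First, by Definition \ref{defsubKgen}, the pullbacks $\tilde{L}_\pm=\overleftarrow{\iota}^*L_\pm$ are smooth subbundles of $\mathbf{T}^cN$, and pointwise they provide a decomposition of type (\ref{descptKgen}) that defines the induced generalized Hermitian structure $(\tilde{G},\tilde{\mathcal{J}})$ on $N$ together with the associated second structure $\tilde{\mathcal{J}}'=\tilde{H}\circ\tilde{\mathcal{J}}$. The $i$-eigenbundles of $\tilde{\mathcal{J}}$ and $\tilde{\mathcal{J}}'$ are the smooth bundles $\tilde{L}_+\oplus\tilde{L}_-$ and $\tilde{L}_+\oplus\bar{\tilde{L}}_-$, respectively.

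Next, Proposition \ref{propbicomplex}, applied at every point $x\in N$, shows that $T_xN$ is bicomplex, i.e., $J_\pm(T_xN)=T_xN$. Proposition \ref{bi=cJJ'} then identifies $\tilde{\mathcal{J}}_x$ and $\tilde{\mathcal{J}}'_x$ with the generalized complex structures induced pointwise from $\mathcal{J}$ and $\mathcal{J}'$ in the sense of Definition \ref{defsbvarc}. Since the above $i$-eigenbundles are smooth by hypothesis, $\tilde{\mathcal{J}}$ and $\tilde{\mathcal{J}}'$ are smooth endomorphisms of $\mathbf{T}N$, so $N$ is a generalized almost complex submanifold of $(M,\mathcal{J})$ as well as of $(M,\mathcal{J}')$.

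Finally, since $M$ is generalized K\"ahler, both $\mathcal{J}$ and $\mathcal{J}'$ are integrable. By the inheritance statement recalled immediately before Proposition \ref{sbvardif}, the induced structures $\tilde{\mathcal{J}}$ and $\tilde{\mathcal{J}}'$ are then integrable on $N$, which by the characterization given after (\ref{KalercuLC}) is exactly the generalized K\"ahler condition on $(N,\tilde{G},\tilde{\mathcal{J}})$. The only step that deserves genuine attention is the pointwise identification of the induced Hermitian data with the pair of induced generalized complex structures via Proposition \ref{bi=cJJ'}; once that is in place, the required smoothness is packaged into Definition \ref{defsubKgen} and integrability is the quoted inheritance property, so the remainder is essentially bookkeeping.
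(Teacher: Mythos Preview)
Your proof is correct and follows essentially the same route as the paper's own argument: reduce to the inheritance of integrability for induced generalized complex structures, applied to both $\mathcal{J}$ and $\mathcal{J}'$. The paper's proof is a one-line appeal to Proposition~\ref{sbvardif}, while you spell out the intermediate identifications (via Propositions~\ref{propbicomplex} and~\ref{bi=cJJ'}) that guarantee the induced Hermitian data really does yield the two induced generalized complex structures in the sense of Definition~\ref{defsbvarc}; this extra care is appropriate but does not change the strategy.
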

\begin{proof} By definition, $M$ is generalized K\"ahler if the generalized structures $\mathcal{J},\mathcal{J}'$ are integrable and Proposition \ref{sbvardif} tells us that, if the induced structures of $N$ exist, they are integrable too. Q.e.d.\end{proof}

From the results obtained in the linear case, it follows that $N$ is a generalized almost Hermitian submanifold iff it is a {\it bicomplex submanifold} of $M$, i.e., $N$ is invariant by the two structures $J_\pm$. Particularly, formula (\ref{f*auxbK}) shows that $\overleftarrow{\iota}^*L_\pm$ are smooth subbundles of $\mathbf{T}^cN$.
This leads to the following result.
\begin{prop}\label{varbix} A submanifold $\iota:N\hookrightarrow M$ of a generalized K\"ahler manifold $M$ is a generalized K\"ahler submanifold iff $N$ is a complex submanifold of the two complex manifolds $(M,J_\pm)$. \end{prop}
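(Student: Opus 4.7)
The plan is to bootstrap the manifold statement from the linear characterization (Proposition~\ref{propbicomplex}) together with the integrability upgrade (Proposition~\ref{propsubK1}), so essentially no new calculation is needed; the only real work is checking smoothness of the pullback bundles and classical integrability of $J_\pm$-invariant submanifolds.

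For the forward implication, suppose $\iota:N\hookrightarrow M$ is a generalized K\"ahler submanifold. Then, by Definition~\ref{defsubKgen}, at each $x\in N$ the tangent space $T_xN\subseteq(T_{\iota(x)}M,G_{\iota(x)},\mathcal{J}_x)$ carries the induced generalized Hermitian structure, so by Proposition~\ref{propbicomplex} applied pointwise $T_xN$ is a bicomplex subspace, i.e., $(J_\pm)_{\iota(x)}(T_xN)=T_xN$. Now, on a generalized K\"ahler manifold $M$ the classical complex structures $J_\pm$ are integrable (this is built into the equivalent classical description recalled around formula~(\ref{KalercuLC})), so $(M,J_+)$ and $(M,J_-)$ are genuine complex manifolds. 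Being a $J_\pm$-invariant smooth submanifold of a complex manifold is exactly the classical definition of a complex submanifold, and thus $N$ is a complex submanifold of both $(M,J_+)$ and $(M,J_-)$.

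For the converse, assume $N$ is a complex submanifold of $(M,J_+)$ and of $(M,J_-)$, so $T_xN$ is $J_\pm$-invariant for every $x\in N$. Pointwise, Proposition~\ref{propbicomplex} then gives that $T_xN$ has an induced linear generalized Hermitian structure, and Proposition~\ref{Gindus} supplies the induced metric. To promote this to a smooth generalized almost Hermitian structure in the sense of Definition~\ref{defsubKgen}, I need the pullback bundles $\overleftarrow{\iota}^*L_\pm$ to be smooth. This is read off directly from formula~(\ref{f*auxbK}) applied to the inclusion $\iota_*$: at each $x$,
\[
\overleftarrow{\iota}_x^*L_{\pm,x}=\{(X,\flat_{\iota^*\psi\pm\iota^*\gamma}X)\,/\,X\in(T_xN)^c,\,J_\pm X=iX\},
\]
and since $J_\pm$ restrict to smooth complex structures on the $J_\pm$-invariant subbundle $TN$ and $\iota^*\gamma,\iota^*\psi$ are smooth tensor fields, these are smooth complex subbundles of $\mathbf{T}^cN$. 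Hence $N$ is a generalized almost Hermitian submanifold.

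Finally, invoking Proposition~\ref{propsubK1}, since $M$ is generalized K\"ahler and $N$ is a generalized almost Hermitian submanifold, the induced structures on $N$ are automatically integrable, so $N$ is a generalized K\"ahler submanifold. The main (minor) obstacle is bookkeeping the smoothness claim; every algebraic step is already available in the cited propositions, and the integrability of $J_\pm$ on $M$ must be invoked explicitly to turn the pointwise bicomplex condition into the global statement that $N$ is a complex submanifold of both $(M,J_\pm)$.
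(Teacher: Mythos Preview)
Your argument is correct. The equivalence with bicomplexity and the smoothness check via formula~(\ref{f*auxbK}) match exactly what the paper states in the paragraph immediately preceding Proposition~\ref{varbix}. The only difference lies in how integrability is established in the converse direction: you invoke Proposition~\ref{propsubK1} directly (which in turn rests on the general fact that induced generalized complex structures of an integrable structure are integrable), whereas the paper reproves the K\"ahler condition from scratch by projecting the classical identity~(\ref{KalercuLC}) from $M$ onto $TN$ using the Gauss--Weingarten equations, so that the Levi-Civita connection of $\gamma$ goes to that of $\iota^*\gamma$ and $d\psi$ to $d(\iota^*\psi)$. Your route is more modular and entirely legitimate given the logical order of the paper; the paper's route has the advantage of making the inheritance of the classical K\"ahler condition~(\ref{KalercuLC}) explicit and independent of the generalized-complex integrability machinery.
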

\begin{proof} We already know that $N$ must be $J_\pm$-invariant and, of course, if $J_\pm$ are integrable, the induced structures are integrable too. The required supplementary condition for $N$ to be generalized K\"ahler is (\ref{KalercuLC}) on $N$ and it follows by taking the $\gamma$-orthogonal projection of the equality (\ref{KalercuLC}) for $M$ onto the tangent spaces of $N$. The projection sends the left hand side of (\ref{KalercuLC}) to the covariant derivative with respect to Levi-Civita connection of the induced metric $\iota^*\gamma$ (look at the Gauss-Weingarten equations \cite{KN}) and the right hand side to the similar expression for the induced form $\iota^*\psi$. Q.e.d. \end{proof}

In order to get a manifold version of Proposition \ref{propprojHerm}, we proceed as in the generalized complex case. We consider a submersion $f:(N,\mathcal{J},\mathcal{J}',G)\rightarrow M$ where the structures $\mathcal{J},\mathcal{J}'$ define a generalized almost Hermitian structure of $N$ and are projectable. Then, $H=-\mathcal{J}\circ\mathcal{J}'$ is projectable too, i.e., it sends projectable cross sections to projectable cross sections. In this case, we obviously get a projection of the generalized almost Hermitian structure of $N$ onto $M$. Formulas (\ref{JApi}) and (\ref{clasicK}) show that the projectability of $\mathcal{J},\mathcal{J}',H$ is equivalent to the projectability of the corresponding classical structures $(\gamma,\psi,J_\pm)$. The latter condition means that $\gamma|_{(ker\,f_*)^{\perp_\gamma}},\psi|_{(ker\,f_*)^{\perp_\gamma}}$ are pullbacks of a metric and of a $2$-form on $M$ and $J_\pm|_{(ker\,f_*)^{\perp_\gamma}}$ sends projectable vector fields to projectable vector fields. Proposition \ref{propprojHerm} tells us that a submersion projects an almost generalized Hermitian structure iff $J_\pm$ are projectable and preserve the kernel $ker\,f_*$. Moreover, if the structure of $N$ is integrable, i.e., $J_\pm$ are integrable and (\ref{KalercuLC}) holds, the projected structure has the same properties. Indeed, the Lie bracket of projectable vector fields is projectable to the corresponding Lie bracket and the Levi-Civita connection of $G$ projects to that of the projected metric (for details, a text on foliations and metrics, e.g., \cite{Mol}, should be consulted). The conclusion is that a submersion sends a projectable generalized K\"ahler structure to a generalized K\"ahler structure.
%\newpage

\noindent{\small Department of
Mathematics, University of Haifa, Israel,
vaisman@math.haifa.ac.il}
\end{document}